\documentclass[12pt, twoside]{article}
\usepackage{amsmath,amsthm,amssymb}
\usepackage{times}
\usepackage{enumerate}
\usepackage{url}
\usepackage{lineno}
\usepackage{microtype}
\usepackage[american]{babel}
\usepackage{indentfirst}
\usepackage{color}
\usepackage{geometry}
\usepackage{epsfig}
\setlength\parskip{.3\baselineskip}


\pagestyle{myheadings}
\def\titlerunning#1{\gdef\titrun{#1}}
\makeatletter
\def\author#1{\gdef\autrun{\def\and{\unskip, }#1}\gdef\@author{#1}}
\def\address#1{{\def\and{\\\hspace*{18pt}}\renewcommand{\thefootnote}{}%
\footnote {#1}}%
\markboth{\autrun}{\titrun}}
\makeatother
\def\email#1{e-mail: #1}
\def\subjclass#1{{\renewcommand{\thefootnote}{}%
\footnote{\emph{Mathematics Subject Classification (2010):} #1}}}
\def\keywords#1{\par\medskip
\noindent\textbf{Keywords.} #1}

\newtheorem{result}{\textbf{Theorem}}

\newtheorem{proposition}{Proposition}[section]
\newtheorem{lemma}{Lemma}[section]
\newtheorem{definition}{Definition}[section]

\newtheorem{remark}{Remark}[section]

\numberwithin{equation}{section}

\newcommand{\R}{\mathbb R}


\frenchspacing

\textwidth=16cm
\textheight=23cm
\parindent=16pt
\oddsidemargin=-0.5cm
\evensidemargin=-0.5cm
\topmargin=-1cm


\begin{document}
\baselineskip=15pt

\titlerunning{Nonlinear and  degenerate discounted approximation in discrete weak KAM theory}
\title{Nonlinear and  degenerate discounted approximation in discrete weak KAM theory}
\author{Panrui Ni\and Maxime Zavidovique}

\maketitle
\address{Panrui Ni: Sorbonne Universit\'e, Universit\'e de Paris Cit\'e, CNRS, Institut de Math\'ematiques de Jussieu-Paris Rive Gauche, Paris 75005, France;
\email{panruini@imj-prg.fr}
\and Maxime Zavidovique: Sorbonne Universit\'e, Universit\'e de Paris Cit\'e, CNRS, Institut de Math\'ematiques de Jussieu-Paris Rive Gauche, Paris 75005, France;
\email{mzavidovi@imj-prg.fr}}

\subjclass{37J50, 37M15.}

\vspace{-6ex}

\begin{flushright}
\emph{\`A la mémoire de Nicolas Bergeron\\
un collègue en or\\
un mathématicien sensationnel\\
une incarnation de générosité.}
\end{flushright}

\begin{abstract}

In this paper, we introduce a discrete version of the nonlinear implicit Lax-Oleinik operator as studied for instance in \cite{WWY}. We consider the associated vanishing discount problem with a non-degenerate condition and prove convergence of solutions as the discount factor goes to $0$. We also discuss the uniqueness of the discounted solution. This paper can be thought as the discrete version of \cite{CFZZ}, and a generalization of \cite{DFIZ} and \cite[Chapter 3]{Z}. The convergence result is a selection principle for fixed points of a family of nonlinear operators.

\keywords{Mather measures, Weak KAM theory, Discretization.}
\end{abstract}





\section*{Introduction}

\subsection{Brief history of the problem}
The discounted approximation appeared for Hamilton-Jacobi equations in Lions Papanicolaou and Varadhan's celebrated preprint \cite{LPV}. The goal is to solve\footnote{In this introduction, solutions are intended in the viscosity sense but no knowledge of this notion is required to understand the paper.} an equation on the torus of the form
\begin{equation}\label{HJ0}
H(x,d_x u) = c_0, \quad x\in \mathbb T^N
\end{equation}
where  $u : \mathbb T^N \to \R$ and $c_0\in \R$ are the unknown and the Hamiltonian $H : \mathbb  T^N \times \R^N\to \R$ satisfies some coercivity condition. The problem of the previous equation is that if it admits solutions, they are not unique (the set of solutions is invariant by addition of constants). To deal with this problem, the authors approximate the equation as follows: given $\lambda >0$, they solve the discounted equation
$$\lambda u_\lambda (x) + H(x,d_x u_\lambda) = 0,$$
and establish that the solution $u_\lambda $ is unique, that the family $(u_\lambda)_{\lambda \in (0,1)}$ is equicontinuous, that $\lambda u_\lambda$ converges to a constant ($-c_0$) as $\lambda \to 0^+$ and that $(u_\lambda + c_0/\lambda)_{\lambda \in (0,1)}$ is bounded. Therefore, taking a converging subsequence $u_{\lambda_n}+ c_0/\lambda_n\to u_0$ provides a solution $u_0$ to \eqref{HJ0}.

The first convergence result for the whole family $(u_\lambda + c_0/\lambda)_{\lambda \in (0,1)}$ was established in \cite{IS} for a particular case followed by \cite{DFIZ2} for a result in full generality under Tonelli type hypotheses on $H$. A discrete version of this result was published in \cite{DFIZ} around the same time. The necessity of convexity of Hamiltonian in the convergence result is given by a counterexample in \cite{Zi}.

This convergence phenomenon was followed by many generalizations (see \cite{DD,IS20} for non-compact cases, see \cite{IMT,IMT2,MT,Zhang} for second order cases, see \cite{DZ,I,IJ} for weakly coupled systems, and see \cite{CP} for mean field games). Amongst the ones that are of interest to us here, let us cite also the papers the papers \cite{Chen,CCIZ,WYZ, Su} that prove similar results for equations of the form $G\big(x, \lambda u_\lambda(x), d_x u_\lambda (x)\big)=0$ where $G(x,u,p)$ verifies Tonelli type hypotheses in the variables $(x,p)$ and is increasing in $u$. This is the nonlinear version of the problem, the results in  \cite{DFIZ2} corresponding to the particular case $G(x,u,p) = u+ H(x,p)$.
The degenerate aspect was studied in \cite{ZAP} for Hamiltonians of the form $G(x,u,p) = \alpha(x)u+ H(x,p)$ where $\alpha$ is a continuous nonnegative function that verifies some non degeneracy condition but is allowed to vanish on large portions of $\mathbb T^N$ (a discrete version of this results is presented in \cite{Z}). Both those settings were merged in a nonlinear degenerate setting in \cite{CFZZ} where general Hamiltonians  $G(x,u,p)$ are considered, verifying Tonelli type hypotheses in the variables $(x,p)$ and being non-decreasing in $u$. The nondegeneracy hypothesis consists in prescribing that $G$ is increasing in some regions.  When the equation is not non-decreasing in $u$, the asymptotic behavior is not clear yet. One can refer to \cite{DW,WYZ2} for the approximation process when $\lambda\to 0^-$, and see \cite{N} for the asymptotic behavior of a particular non-monotone case.

\subsection{The discretization }
The philosophy of the discrete problem stems from  Lax-Oleinik type formulas. In the previously mentioned results, if $u_\lambda $ solves $G\big(x, \lambda u_\lambda(x), d_x u_\lambda (x)\big)=0$ then the solution $u_\lambda$ verifies
$$\forall x\in \mathbb T^N, \ \ \forall t>0, \quad u_\lambda(x) = \inf_\gamma u_\lambda \big(\gamma(-t)\big) + \int_{-t}^0 L_G\big(\gamma(s), u_\lambda \big(\gamma(s)\big),\dot \gamma(s)\big) ds,$$
where $L_G : \mathbb T^N \times \R \times \mathbb R^N$ is a function related to $G$ that is non-increasing in $u$, and the infimum is taken amongst absolutely continuous curves $\gamma : \mathbb T^N \to \R$ such that $\gamma(0)=x$. The idea is then to fix $t>0$ (small in spirit) and to consider an approximation of the integral by a function that may depend on $\gamma(0)=x$ and $\gamma(-t)$ but also possibly on the values $u_\lambda(x)$ and $u_\lambda\big(\gamma(-t)\big)$.  When the approximation function depends linearly on $u_\lambda\big(\gamma(-t)\big)$, and does not depend on $u_\lambda(x)$, the whole discrete system then reduces to the case considered in \cite{DFIZ} and \cite[Chapter 3]{Z}. One can refer to the example given in Section \ref{secdisc} below.

\subsection{Setting and statement of results}

One advantage of the discretization is that it is non longer necessary to have a differentiable structure. We then work on $(X,d)$ a compact metric space and consider a continuous function that is $C^1$ with respect to the last two variables\footnote{Our results actually require less regularity.}, $\ell : X\times X \times \R \times \R$ such that

\begin{itemize}
\item [(1)] there is a constant $\kappa_u>0$ such that for all $(x,y,u,v)$,  $0\geqslant \partial_u \ell (x,y,u,v) \geqslant -\kappa_u$,
\item [(2)] there is a constant $\kappa_v>0$ such that for all $(x,y,u,v)$,  $0\geqslant \partial_u \ell (x,y,u,v) \geqslant -\kappa_v$,
\item [(3)] $\int_{X\times X}\big(\partial_u \ell(z,x,0,0)+\partial_v \ell(z,x,0,0)\big)d\mu(z,x)<0$ for all Mather measures $\mu$ of $\ell(\cdot,\cdot,0,0)$.
\end{itemize}

The notion of Mather measure will be detailed later in the paper. Let us already stress that it may happen (quite often actually) that there is only one Mather measure. Therefore if this is the case, this last nondegeneracy condition only requires that $\partial_u \ell(z,x,0,0)$ or $\partial_v \ell(z,x,0,0)$ is negative somewhere on the support  of this Mather measure.

Let us denote $c_0\in \R$ the critical constant of the function $\ell(\cdot,\cdot,0,0)$ (its precise definition is given later). Given this cost function (or discrete Lagrangian) we introduce an implicit Lax-Oleinik operator:

\begin{proposition}
There is a $\lambda_0>0$ such that for $0<\lambda<\lambda_0$, if $\varphi \in C^0(X,\R)$ there is a unique
$T_\lambda \varphi \in C^0(X,\R)$ such that for all $x\in X$,
\[T_\lambda \varphi(x)=\min_{z\in X}\big\{\varphi(z)+\ell\big(z,x,\lambda \varphi(z),\lambda T_\lambda \varphi(x)\big)\big\}+c_0.\]
\end{proposition}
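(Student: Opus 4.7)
The plan is to fix $x\in X$ and view the implicit relation as a fixed-point equation in the single real variable $w=T_\lambda\varphi(x)$. Set
\[F_x(w):=\min_{z\in X}\bigl\{\varphi(z)+\ell\bigl(z,x,\lambda\varphi(z),\lambda w\bigr)\bigr\}+c_0,\]
which is well defined for every $w\in\R$ by continuity of $\ell$ and compactness of $X$. Hypothesis (2), read as bounding the derivative in the last variable between $-\kappa_v$ and $0$, ensures that $v\mapsto\ell(z,x,u,v)$ is $\kappa_v$-Lipschitz and non-increasing; combined with the elementary inequality $|\min_z f(z)-\min_z g(z)|\le\sup_z|f(z)-g(z)|$, this makes $F_x$ a non-increasing $\lambda\kappa_v$-Lipschitz map on $\R$. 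Choosing $\lambda_0:=1/\kappa_v$, for any $\lambda<\lambda_0$ the map $F_x$ is a strict contraction, and Banach's fixed point theorem delivers a unique $T_\lambda\varphi(x)\in\R$ satisfying the stated identity.

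Next I would establish a uniform bound to prepare for continuity. Monotonicity of $F_x$ together with the fixed-point identity pin down $w^*(x):=T_\lambda\varphi(x)$ between $0$ and $F_x(0)$: if $w^*(x)>0$ then $w^*(x)=F_x(w^*(x))\le F_x(0)$, and symmetrically if $w^*(x)<0$. Hence $|T_\lambda\varphi(x)|\le|F_x(0)|$, and since $x\mapsto F_x(0)$ is continuous on the compact space $X$, we get $\|T_\lambda\varphi\|_\infty\le C$ for a constant $C$ depending only on $\lambda$, $\|\varphi\|_\infty$, $c_0$ and $\ell$.

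For continuity in $x$, fix $x_1,x_2\in X$ and set $w_i:=T_\lambda\varphi(x_i)$. Using $w_i=F_{x_i}(w_i)$, the triangle inequality and the contraction property,
\[|w_1-w_2|\le|F_{x_1}(w_1)-F_{x_1}(w_2)|+|F_{x_1}(w_2)-F_{x_2}(w_2)|\le\lambda\kappa_v|w_1-w_2|+\omega\bigl(d(x_1,x_2)\bigr),\]
where $\omega$ is a modulus of continuity for $(x,z)\mapsto\ell(z,x,\lambda\varphi(z),\lambda w)$ uniform in $z\in X$ and $w\in[-C,C]$, coming from uniform continuity of $\ell$ on the compact set $X\times X\times[-\lambda\|\varphi\|_\infty,\lambda\|\varphi\|_\infty]\times[-\lambda C,\lambda C]$. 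Rearranging yields $|w_1-w_2|\le\omega(d(x_1,x_2))/(1-\lambda\kappa_v)$, proving $T_\lambda\varphi\in C^0(X,\R)$.

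The main obstacle is largely one of bookkeeping: combining the min-inequality with the one-sided Lipschitz and monotonicity hypotheses, and checking that the a priori bound $C$ really is uniform so that $\omega$ makes sense. Once it is observed that $w\mapsto F_x(w)$ is a non-increasing $\lambda\kappa_v$-contraction on $\R$, existence, uniqueness, the uniform bound and continuity each follow in a few lines.
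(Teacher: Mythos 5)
Your argument is correct, but it is organized differently from the paper's. The paper runs the Banach fixed point theorem once, in the function space $(C^0(X,\R),\|\cdot\|_\infty)$: for fixed $\varphi$ it defines $\mathcal Af(x)=\min_{z\in X}\{\varphi(z)+\ell(z,x,\lambda\varphi(z),\lambda f(x))\}+c_0$, checks that $\mathcal A$ maps $C^0(X,\R)$ into itself (an infimum of an equicontinuous family) and is a $\lambda\kappa_v$-contraction for the sup-norm, and obtains $T_\lambda\varphi$ as the unique continuous fixed point, so continuity comes for free from the ambient space. You instead solve a scalar fixed-point problem $w=F_x(w)$ separately for each $x$, and then must supply the a priori bound and the continuity of $x\mapsto w^*(x)$ by hand; both steps are done correctly (the estimate $|w_1-w_2|\le\lambda\kappa_v|w_1-w_2|+\omega(d(x_1,x_2))$ is exactly the right bookkeeping, and your monotonicity trick for the bound $|w^*(x)|\le|F_x(0)|$ works, though the contraction alone would already give $|w^*(x)|\le|F_x(0)|/(1-\lambda\kappa_v)$ without invoking monotonicity). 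Both routes rest on the same contraction constant $\lambda\kappa_v<1$ coming from hypothesis (2)/(Lv), so neither is more general; what each buys is slightly different. Your pointwise version yields uniqueness among \emph{all} functions satisfying the relation, not only continuous ones, and it produces the modulus of continuity $\omega/(1-\lambda\kappa_v)$ in one line, whereas the paper recovers the same modulus by an inductive estimate on the iterates $f_{n+1}=\mathcal Af_n$. The paper's function-space version, on the other hand, packages continuity automatically and sets up the iteration $(f_n)_n$ that it then reuses to prove that $T_\lambda$ is a compact and continuous operator on $C^0(X,\R)$, facts needed later that your construction would have to re-derive. One cosmetic remark: your choice $\lambda_0=1/\kappa_v$ suffices for this proposition, but the paper takes $\lambda_0\max(\kappa_u,\kappa_v)<1$ because the bound $\lambda\kappa_u\le 1$ is also required for the monotonicity and non-expansiveness properties used elsewhere.
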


The implicit Lax-Oleinik semigroup was studied in \cite{WWY} in the continuous setting. It corresponds to the viscosity solutions of Hamilton-Jacobi equations depending on the unknown function. It is also meaningful in the optimal control theory of systems with a non-holonomic constraint, see \cite{CCJWY}. Thus, our discrete semigroup here can be thought of as an approximation of the cost function of a class of optimal control systems.

We then solve the discounted equation:

\begin{result}
For $\lambda \in (0,\lambda_0)$ the operator $T_\lambda$ has a fixed point $u_\lambda$. Moreover if we set $S_\lambda$ the set of fixed points of $T_\lambda$, then the family $(S_\lambda )_{\lambda \in (0,\lambda_0)} $ is made of equicontinuous and equibounded functions.
\end{result}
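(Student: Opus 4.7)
The proof splits into three parts: properties of $T_\lambda$, a priori estimates, and existence. For Part 1, using the implicit definition together with the bounds $\partial_u \ell \in [-\kappa_u, 0]$, $\partial_v \ell \in [-\kappa_v, 0]$, I verify that $T_\lambda$ is monotone on $C^0(X, \R)$ and non-expansive in sup norm (for $\lambda < \lambda_0$). Both come from comparing $T_\lambda \varphi_1(x)$ and $T_\lambda \varphi_2(x)$ at a common minimizer and applying the mean value theorem to $\ell$ in its last two variables, exploiting that $w \mapsto w - \min_z [\varphi(z) + \ell(z, x, \lambda \varphi(z), \lambda w)] - c_0$ has derivative at least $1$, so its unique zero $T_\lambda \varphi(x)$ depends monotonically and non-expansively on the data.

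For Part 2, let $u_\lambda \in S_\lambda$; the fixed-point equation gives the subsolution inequality
\[u_\lambda(x) - u_\lambda(z) - c_0 \leq \ell(z, x, \lambda u_\lambda(z), \lambda u_\lambda(x)) \quad (z, x \in X),\]
with equality at the minimizer $z^*_\lambda(x)$. Let $\mu$ be a Mather measure for $\ell_0 := \ell(\cdot, \cdot, 0, 0)$, so $\pi_{1*}\mu = \pi_{2*}\mu$ and $\int \ell_0 \, d\mu = -c_0$. Integrating against $\mu$ (the marginal condition kills $u_\lambda(x) - u_\lambda(z)$) and Taylor-expanding $\ell$ in $(u, v)$ around $(0, 0)$ yields
\[0 \leq \int_0^1 \int_{X \times X} \big[u_\lambda(z) \, \partial_u \ell|_s + u_\lambda(x) \, \partial_v \ell|_s\big] \, d\mu(z, x) \, ds,\]
with $\partial_u \ell|_s := \partial_u \ell(z, x, s\lambda u_\lambda(z), s\lambda u_\lambda(x))$ and similarly for $\partial_v \ell|_s$. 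Writing $u_\lambda = u_\lambda(x_0) + (u_\lambda - u_\lambda(x_0))$ for arbitrary $x_0 \in X$, the leading term equals $u_\lambda(x_0)$ times an integral that, by condition (3) and continuity of the partials, approaches a strictly negative number as $\lambda \to 0$; the remainder is controlled by the oscillation of $u_\lambda$. This bounds $u_\lambda(x_0)$ from above in terms of $\mathrm{osc}(u_\lambda)$; a parallel argument using the equality at $z^*_\lambda$ and an invariant probability of $z^*_\lambda$ (whose accumulation points as $\lambda \to 0$ are themselves Mather measures) provides the lower bound. Equicontinuity follows, as long as $\lambda \|u_\lambda\|_\infty$ is uniformly bounded, from
\[(1 - \lambda \, \partial_v \ell) \, \big(u_\lambda(x) - u_\lambda(y)\big) \leq \omega_\ell(d(x, y)),\]
obtained by comparing $u_\lambda(x)$ and $u_\lambda(y)$ through a common minimizer, with $\omega_\ell$ a uniform modulus of continuity of $\ell$ on a compact neighborhood of $X \times X \times \{0\} \times \{0\}$. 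A bootstrap starting from a preliminary estimate on $\lambda \|u_\lambda\|_\infty$ combines these into the uniform $L^\infty$ and equicontinuity bounds.

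For Part 3, because $T_\lambda$ is only non-expansive, I use an approximation: for small $\epsilon > 0$, set $\ell_\epsilon(z, x, u, v) := \ell(z, x, u, v) - \epsilon(u + v)$. This still satisfies (1)--(3) (with $\kappa_u + \epsilon$ and $\kappa_v + \epsilon$; the critical constant and Mather measures are unchanged since $\ell_\epsilon|_{u=v=0} = \ell_0$, and the integral in (3) is even more negative), and the comparison of Part 1 now shows that the associated operator $T_{\lambda, \epsilon}$ is a strict contraction of rate at most $(1 - \lambda \epsilon)/(1 + \lambda \epsilon) < 1$. Banach's theorem supplies a unique $u_\lambda^\epsilon$, the a priori bounds of Part 2 are uniform in $\epsilon$, and Arzel\`a-Ascoli extracts a subsequence $u_\lambda^{\epsilon_n} \to u_\lambda$ uniformly; the stability of the implicit construction (derivative $\geq 1$ in the defining equation) passes the fixed-point equation to the limit, so $u_\lambda \in S_\lambda$. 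The main obstacle is the bootstrap in Part 2: condition (3) is only an integral condition against Mather measures, not a pointwise bound on $\partial_u \ell + \partial_v \ell$, so the uniform $L^\infty$ bound must be extracted by controlling the oscillation through equicontinuity, which itself requires a preliminary control on $\lambda u_\lambda$. This delicate interplay is the key novelty over the non-degenerate setting of \cite{DFIZ}.
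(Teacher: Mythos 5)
There is a genuine gap, and it sits exactly where you flag it: the ``preliminary estimate on $\lambda\|u_\lambda\|_\infty$'' that your Part~2 bootstrap requires is never produced, and without it every link in your chain fails. The coefficient $\int_0^1\int(\partial_u\ell|_s+\partial_v\ell|_s)\,d\mu\,ds$ is only known to be negative if the partials are evaluated near $(0,0)$, i.e.\ if $\lambda\|u_\lambda\|_\infty$ is already small; the accumulation points of your measures built from $z^*_\lambda$ are only Mather measures under the same assumption; and the equicontinuity modulus $\omega_\ell$ is only uniform on a compact set $X\times X\times[-M,M]^2$, which again presupposes a bound on $\lambda\|u_\lambda\|_\infty$. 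Since $T_\lambda$ is merely non-expansive, existence (your Part~3) also cannot close without a bounded invariant set, so the circularity infects all three parts. Note also that hypothesis (l4)/(3) is an integral condition at $(u,v)=(0,0)$ only, so your Taylor expansion ``$\partial_u\ell|_s$ along the segment'' uses more regularity than the paper's (l3) grants, and more importantly cannot be made quantitative away from the origin.

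The paper breaks the circle with an observation your sketch does not contain: choose weak KAM solutions $\bar u\geq 0$ and $\underline u\leq 0$ for $\ell_0$. Monotonicity (l2) of $\ell$ in $(u,v)$ gives, with no smallness of $\lambda u$ whatsoever, $\ell(z,x,\lambda\bar u(z),\lambda T_\lambda\bar u(x))\leq \ell(z,x,0,0)$ at a point where $T_\lambda\bar u(x)>\bar u(x)\geq 0$, hence $T_\lambda\bar u\leq\bar u$ and symmetrically $T_\lambda\underline u\geq\underline u$. This yields a $\lambda$-independent invariant order interval $[\underline u,\bar u]$, so monotone iteration of $T_\lambda$ from $\underline u$ produces a fixed point directly (no $\epsilon$-perturbation or Banach contraction needed), and equicontinuity follows with a modulus on a fixed compact set. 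For \emph{arbitrary} fixed points the paper then argues by contradiction: if $\max(u_\lambda-\bar u)>0$, a backward calibrated orbit of $\bar u$ through the maximizer carries the maximum along the whole orbit; the empirical measures along this orbit converge to a Mather measure $\mu$ on whose support $u_\lambda>\bar u\geq 0$ and $\ell(z,x,\cdot,\cdot)$ is constant on $[0,\lambda u_\lambda(z)]\times[0,\lambda u_\lambda(x)]$, forcing $\partial_u\ell(z,x,0,0)=\partial_v\ell(z,x,0,0)=0$ on $\supp\mu$ and contradicting (l4). The point is that the degeneracy is handled by \emph{constructing} a Mather measure localized where the maximum is attained, rather than by testing against an arbitrary Mather measure as you do; the latter cannot see where $u_\lambda$ is large. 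To repair your proof you would need to supply both the invariant interval argument and this localized measure construction.
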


Finally we prove the convergence of solutions of the discounted equations:

\begin{result}
The family $(S_\lambda )_{\lambda \in (0,\lambda_0)} $ converges to a singleton as $\lambda \to 0$ in the sense that there exists $u_0 : X\to \R$ such that for any choice $u_\lambda \in S_\lambda$ for $\lambda \in (0,\lambda_0)$ the (uniform) convergence $u_\lambda \to u_0$ holds as $\lambda \to 0^+$.
\end{result}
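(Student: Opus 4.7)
The plan follows the now-classical blueprint of \cite{DFIZ,CFZZ}, adapted to the nonlinear implicit discrete setting of this paper. By the previous theorem the family $\bigcup_{\lambda\in(0,\lambda_0)}S_\lambda$ is equicontinuous and equibounded, hence relatively compact in $C^0(X,\R)$ by Arzel\`a-Ascoli. Since $\lambda u_\lambda\to 0$ uniformly and $\ell$ is continuous, passing to the limit in the fixed-point equation shows that any uniform accumulation point $u^*$ of a sequence $(u_{\lambda_n})_n$ with $u_{\lambda_n}\in S_{\lambda_n}$ and $\lambda_n\to 0^+$ satisfies
\[u^*(x)=\min_{z\in X}\bigl\{u^*(z)+\ell(z,x,0,0)\bigr\}+c_0,\]
i.e.\ is a critical weak KAM solution of the unperturbed Lagrangian. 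The task is therefore reduced to showing that all such accumulation points coincide.

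The core of the proof is a one-sided inequality obtained by iterating $T_\lambda$ along a calibrated backward orbit. Fix a critical subsolution $w$ of $\ell(\cdot,\cdot,0,0)$ and, for each $x\in X$, a minimizer $z_x$ realizing $T_\lambda u_\lambda(x)$. Writing the equality for $u_\lambda$ at $z_x$, the subsolution inequality for $w$, and linearizing $\ell(z,x,\cdot,\cdot)$ near $(0,0)$ by the mean value theorem yields
\[(w-u_\lambda)(x)\leq(w-u_\lambda)(z_x)-\lambda\bigl(u_\lambda(z_x)\,\bar\partial_u+u_\lambda(x)\,\bar\partial_v\bigr),\]
with intermediate values of $\partial_u\ell,\partial_v\ell$. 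Iterating $x_0=x$, $x_{k+1}=z_{x_k}$ and telescoping gives
\[\frac{(w-u_\lambda)(x_0)-(w-u_\lambda)(x_n)}{n\lambda}\leq\int\bigl(-u_\lambda(z)\,\bar\partial_u-u_\lambda(x)\,\bar\partial_v\bigr)\,d\mu_n^\lambda(z,x),\]
where $\mu_n^\lambda:=\frac1n\sum_{k=0}^{n-1}\delta_{(x_{k+1},x_k)}$. A weak-$*$ accumulation point $\mu_\infty$ of $(\mu_n^\lambda)$ is a closed probability measure; the calibration of the orbit for $u_\lambda$ combined with $u_\lambda\to u^*$ and the stability of the action forces $\mu_\infty$ to be a Mather measure of $\ell(\cdot,\cdot,0,0)$. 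Taking a suitable joint limit $n\to\infty$, $\lambda\to 0^+$ (with $n\lambda$ tuned so that the linearization error is negligible while the boundary term is still crushed), one deduces, for every such Mather measure $\mu_\infty$,
\[\int\bigl(w(x)-u^*(x)\bigr)\bigl(-\partial_u\ell(z,x,0,0)-\partial_v\ell(z,x,0,0)\bigr)\,d\mu_\infty(z,x)\leq 0.\]

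By hypothesis (3), $d\nu_\mu:=(-\partial_u\ell-\partial_v\ell)(z,x,0,0)\,d\mu(z,x)$ is a nonnegative measure of strictly positive total mass whenever $\mu$ is a Mather measure of $\ell(\cdot,\cdot,0,0)$. The previous inequality then yields a Davini-Fathi-Iturriaga-Zavidovique-type variational characterization: $u^*$ coincides with the maximal critical subsolution $\tilde w$ such that $\int\tilde w\,d\nu_\mu\leq\int u^*\,d\nu_\mu$ for every Mather measure $\mu$, and applied with $w=u^*$ itself it shows that the admissible class of competitors is the same for any accumulation point. Uniqueness of the maximal element then forces all accumulation points to agree, which upgrades subsequential convergence to convergence of the whole family $(u_\lambda)$ and identifies the limit $u_0$.

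The main obstacle is the extraction of a genuine Mather measure of $\ell(\cdot,\cdot,0,0)$ out of orbits calibrated for the \emph{perturbed} operator $T_\lambda$: this requires a careful joint limit and a Ma\~n\'e-type characterization of Mather measures as action minimizers over closed probability measures, which must be ported to the discrete nonlinear setting. The \emph{degenerate} aspect of hypothesis (3) -- the fact that $\partial_u\ell+\partial_v\ell$ may vanish on large portions of $\supp\mu$ -- forces us to test against the weighted measure $d\nu_\mu$ rather than $d\mu$; condition (3) is precisely what guarantees $\nu_\mu\neq 0$, and thereby what enables the variational selection above to pin down a unique limit.
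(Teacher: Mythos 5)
Your overall strategy (compactness via equicontinuity, identification of accumulation points as weak KAM solutions, and selection through inequalities tested against Mather measures) matches the paper's, but the central step is broken. In your telescoped inequality
\[
\frac{(w-u_\lambda)(x_0)-(w-u_\lambda)(x_n)}{n\lambda}\leq\int\bigl(-u_\lambda(z)\,\bar\partial_u-u_\lambda(x)\,\bar\partial_v\bigr)\,d\mu_n^\lambda(z,x),
\]
the subsolution $w$ appears only in the boundary term on the left, which is $O\big(\tfrac{1}{n\lambda}\big)$ and must be sent to $0$, while the right-hand side does not involve $w$ at all. Hence any joint limit only yields $\int\big(\partial_u\ell(z,x,0,0)u^*(z)+\partial_v\ell(z,x,0,0)u^*(x)\big)d\mu_\infty\le 0$ for one particular Mather measure $\mu_\infty$; the asserted inequality $\int(w-u^*)\,d\nu_{\mu_\infty}\le 0$ for arbitrary subsolutions $w$ does not follow, because uniform Ces\`aro weights destroy exactly the coupling between $w$ and the orbit that the selection principle requires. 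The paper's proof replaces $\tfrac1n\sum\delta$ by the discounted weights $\beta_k=\prod_i\frac{1+\lambda\partial_u\ell}{1-\lambda\partial_v\ell}$ of Lemma \ref{beta}; the nontrivial uniform bound $\lambda\sum_k\beta_k\le K$ (proved by contradiction from hypothesis (l4)) guarantees both that the term $\beta_0 w(x_0)\to w(x_0)$ survives in the limit and that the normalized weighted measures $\mu^{1}_\lambda,\mu^{2}_\lambda$ converge to a Mather measure (Propositions \ref{u*>} and \ref{mathermeasure}); the residual integral terms are then nonnegative in the limit precisely when $w$ belongs to the admissible class $\mathcal S_0$, yielding the pointwise bound $u_*(x_0)\ge w(x_0)$.

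Two further points. First, you compress the two terms $\partial_u\ell(z,x,0,0)w(z)$ and $\partial_v\ell(z,x,0,0)w(x)$ into a single weighted measure $d\nu_\mu=(-\partial_u\ell-\partial_v\ell)(z,x,0,0)\,d\mu$; this is not legitimate because $\partial_u\ell$ and $\partial_v\ell$ each depend on both variables, so closedness of $\mu$ does not allow you to replace $w(z)$ by $w(x)$ under these weighted integrals (the paper keeps the two terms separate throughout, both in the definition of $\mathcal S_0$ and in Theorem \ref{alternative}). Second, your final characterization of $u^*$ as the maximal subsolution $\tilde w$ with $\int\tilde w\,d\nu_\mu\le\int u^*\,d\nu_\mu$ is circular: the admissible class is defined through the unknown limit itself, and the claim that it is independent of the accumulation point is precisely what must be proved. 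The paper avoids this by defining $\mathcal S_0$ intrinsically via condition \eqref{cond} with threshold $0$, proving $u_*\in\mathcal S_0$ by integrating the fixed-point relation against an arbitrary Mather measure (Proposition \ref{u*<}), and proving $u_*\ge\sup_{w\in\mathcal S_0}w$ by the discounted-orbit argument described above.
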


In establishing those results we also prove two characterizations  for the limit $u_0$. We also address the issue of uniqueness of fixed points $u_\lambda \in S_\lambda$ under quite natural assumptions. Actually, in the simplified setting presented above, we prove that $S_\lambda$ is a singleton for $\lambda$ small enough.

\subsection{Organization of the paper}

\begin{itemize}
\item In the first Section \ref{secwk} we recall some needed facts on discrete weak KAM solutions (corresponding to $\lambda = 0$).

\item In the following Section \ref{Sec2} we introduce a general theory of implicit Lax-Oleinik operators.

\item Then in Section \ref{secdisc} we define and study solutions to the discounted equations.

\item Finally in Section \ref{secconv} we prove the convergence as $\lambda \to 0$ of solutions to the discounted equations.

\item The last section adresses the uniqueness issue.

\end{itemize}

\section{Classical discrete weak KAM theory}\label{secwk}
We briefly recall classical results that will be used in the rest of the paper. References are, amongst many others, \cite{Z,Z1,DFIZ}. Let $(X,d)$ be a compact metric space and $\ell_0 : X\times X \to \R$ a continuous function sometimes called cost function. The discrete Lax-Oleinik semigroup is
\begin{definition}\rm
The discrete Lax-Oleinik semigroup is the operator $T_0 : C^0(X,\R) \to C^0(X, \R)$ which to $f : X\to \R$ associates
$$T_0 f : x \mapsto T_0f(x) = \min_{y\in X} f(y)+\ell_0(y,x).$$
\end{definition}
It can be checked that $T_0$ is non decreasing, $1$-Lipschitz for the sup-norm and commutes with addition of constants. This allows to prove the discrete weak KAM theorem:

\begin{result}\label{weakKAM}
There exists a unique constant $c_0$ such that there is a function $u : X\to \R$ verifying $u = T_0u + c_0$.
\end{result}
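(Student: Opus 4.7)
The plan is to handle uniqueness of $c_0$ by a short order argument and to establish existence by the classical discounted approximation, which is also the scheme the rest of the paper generalizes.

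\textbf{Uniqueness.} Suppose $u_i = T_0 u_i + c_i$ for $i=1,2$. Since $T_0$ commutes with constants, I translate $u_2$ so that $u_1 \geq u_2$ on $X$ with equality at some point $x_0 \in X$. Monotonicity of $T_0$ then gives $T_0 u_1 \geq T_0 u_2$, whence $u_1 - u_2 \geq c_1 - c_2$ everywhere; evaluating at $x_0$ yields $c_1 \leq c_2$, and swapping the roles gives $c_1 = c_2$.

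\textbf{Existence.} For $\lambda \in (0,1)$ I introduce the discounted operator
\[T_0^\lambda f(x) = \min_{y \in X} \big((1-\lambda) f(y) + \ell_0(y,x)\big).\]
This is a strict contraction of factor $1-\lambda$ on $(C^0(X,\R),\|\cdot\|_\infty)$, so it admits a unique fixed point $u_\lambda$. The plan is to produce, as $\lambda \to 0^+$, a subsequential limit of a suitable translate of $u_\lambda$ that is a fixed point of $T_0 + c_0$.

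Three ingredients then enter the limit. First, for any $f$, the function $T_0^\lambda f$ inherits the modulus of continuity of $\ell_0$ in its second variable, uniformly in $f$ and $\lambda$, so $(u_\lambda)$ is equicontinuous and, $X$ being compact, $\operatorname{osc}(u_\lambda)$ is uniformly bounded. Second, evaluating the fixed point equation at a maximizer $x_M$ (resp.\ minimizer $x_m$) of $u_\lambda$ and using the test point $y = x_M$ (resp.\ the minimizer $y^*(x_m)$) gives $\lambda \|u_\lambda\|_\infty \leq \|\ell_0\|_\infty$; so although $u_\lambda$ itself may blow up, the product $\lambda u_\lambda$ stays bounded. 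Third, comparing $T_0^\lambda u_\lambda$ with $T_0 u_\lambda$ by testing an optimizer of one in the other yields the sandwich
\[\lambda \min_X u_\lambda \;\leq\; T_0 u_\lambda(x) - u_\lambda(x) \;\leq\; \lambda \max_X u_\lambda, \qquad \forall x \in X,\]
so the defect $T_0 u_\lambda - u_\lambda$ is within $\lambda \operatorname{osc}(u_\lambda) \to 0$ of the constant $\lambda \min_X u_\lambda$.

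Setting $v_\lambda := u_\lambda - \min_X u_\lambda$, Arzel\`a--Ascoli combined with Bolzano--Weierstrass yields a sequence $\lambda_n \to 0$ along which $v_{\lambda_n} \to v \in C^0(X,\R)$ uniformly and $\lambda_n \min u_{\lambda_n} \to -c_0 \in \R$. Since $T_0$ commutes with constants, $T_0 u_\lambda - u_\lambda = T_0 v_\lambda - v_\lambda$; passing to the limit in the sandwich, and using that $T_0$ is $1$-Lipschitz for the sup norm (hence continuous), I obtain $T_0 v - v = -c_0$, i.e., $v = T_0 v + c_0$. The value of $c_0$ is independent of the subsequence thanks to the uniqueness step. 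I expect the main obstacle to be the sandwich step, because it is what guarantees that a well-defined constant emerges in the limit equation; the rest reduces to order and standard compactness.
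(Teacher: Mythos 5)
Your proof is correct and complete. Note that the paper does not actually prove this theorem: it is recalled as classical, with the proof deferred to the cited references, and only the three properties of $T_0$ you use (monotonicity, $1$-Lipschitz continuity for the sup-norm, commutation with additive constants) are stated. Your uniqueness argument is the standard order-theoretic one and is airtight, given that $u_1-u_2$ attains its minimum on the compact space $X$. Your existence argument via the discounted operator $T_0^\lambda$ is one of the two classical routes (the other being a Schauder-type fixed point argument applied to the normalized map $f\mapsto T_0f-\min_X T_0f$ on an equicontinuous convex compact set), and it is in fact the route sketched in the paper's own introduction following \cite{LPV}: contraction gives $u_\lambda$, equicontinuity and the bound $\lambda\|u_\lambda\|_\infty\leqslant\|\ell_0\|_\infty$ give compactness of the normalized family, and your sandwich
\[
\lambda \min_X u_\lambda \;\leqslant\; T_0 u_\lambda - u_\lambda \;\leqslant\; \lambda \max_X u_\lambda
\]
is exactly the estimate needed to identify the limit equation; all three steps check out. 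The choice is fitting here, since the discounted approximation is precisely the mechanism the rest of the paper generalizes to the nonlinear, degenerate setting.
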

The constant $c_0$ is called the critical constant of $\ell_0$. A function $u$ verifying $u = T_0u + c_0$ is called a weak KAM solution. Weak KAM solutions are not unique, for instance, if $K\in \R$ then $u+K$ is also a weak KAM solution. Note also that the critical constant for the cost function $\tilde\ell_0 = \ell_0+c_0$ is $\tilde c_0=0$.

By definition of $T_0$, a weak KAM solution $u$ verifies $u(y)-u(x)\leqslant \ell_0(x,y) + c_0$ for all $x,y \in X$. This motivates the definition

\begin{definition}\rm
A function $v : X\to \R$ is called a subsolution if it verifies
$$\forall (x,y)\in X\times X , \quad v(y)-v(x) \leqslant  \ell_0(x,y) + c_0$$
This is equivalent to $T_0v+c_0 \geqslant v$.
\end{definition}
An easy but important fact is that
\begin{proposition}\label{conv}
The set of subsolutions is closed (under uniform convergence but also under pointwise convergence) and it is convex.
\end{proposition}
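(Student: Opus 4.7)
The statement packages together three assertions about the set
\[
\mathcal{S}=\{v\in C^0(X,\R) : v(y)-v(x)\leqslant \ell_0(x,y)+c_0 \text{ for all }(x,y)\in X\times X\},
\]
and each one follows directly from the linearity of the defining inequality in $v$, so I expect no real obstacle: the proof is essentially an exercise in manipulating the pointwise inequality.

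First I would handle convexity. Given $v_1,v_2\in \mathcal{S}$ and $t\in[0,1]$, set $w=tv_1+(1-t)v_2$. For any $(x,y)\in X\times X$,
\[
w(y)-w(x)=t\bigl(v_1(y)-v_1(x)\bigr)+(1-t)\bigl(v_2(y)-v_2(x)\bigr)\leqslant t\bigl(\ell_0(x,y)+c_0\bigr)+(1-t)\bigl(\ell_0(x,y)+c_0\bigr),
\]
which equals $\ell_0(x,y)+c_0$. Hence $w\in\mathcal{S}$.

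Next, closure under pointwise convergence. Let $(v_n)\subset\mathcal{S}$ converge pointwise to some function $v:X\to\R$; observe that the pointwise limit need not a priori be continuous, but each inequality $v_n(y)-v_n(x)\leqslant \ell_0(x,y)+c_0$ passes to the limit to give $v(y)-v(x)\leqslant \ell_0(x,y)+c_0$ for every $(x,y)$. Swapping the roles of $x$ and $y$ yields $|v(y)-v(x)|\leqslant \max\bigl(\ell_0(x,y),\ell_0(y,x)\bigr)+c_0$, and since $\ell_0$ is continuous on the compact set $X\times X$, it is in particular uniformly continuous; choosing $x=y$ shows $\ell_0(x,x)+c_0\geqslant 0$, and more importantly $v$ inherits a modulus of continuity from $\ell_0$, so $v\in C^0(X,\R)$. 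Thus $v\in\mathcal{S}$.

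Finally, closure under uniform convergence is immediate: uniform convergence implies pointwise convergence, so this case reduces to the previous one. I would include this sentence only for completeness, since the pointwise statement is strictly stronger. The whole argument takes a few lines and does not interact with any of the deeper content (the weak KAM theorem, existence of $c_0$, etc.); it merely exploits that $\mathcal{S}$ is cut out by a family of affine inequalities on $v$.
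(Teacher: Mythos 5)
Your convexity argument, and the observation that the defining inequalities pass to pointwise (hence uniform) limits, are correct; note also that the paper states this proposition without proof, so there is no "paper's argument" to compare against. Moreover the paper's definition of a subsolution only requires a function $v:X\to\R$ to satisfy the family of affine inequalities, with no continuity assumption, so on the literal reading the set of subsolutions really is just cut out by affine constraints and your first and third paragraphs already finish the job.

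The genuine gap is your claim that the pointwise limit ``inherits a modulus of continuity from $\ell_0$.'' The two-sided bound you derive is $-\ell_0(y,x)-c_0\le v(y)-v(x)\le\ell_0(x,y)+c_0$, and as $d(x,y)\to 0$ the right-hand side tends to $\ell_0(x,x)+c_0$, which --- as you yourself note --- is only known to be $\ge 0$, not $=0$. Wherever it is strictly positive the bound yields no modulus of continuity at all. Concretely, take $X=[0,1]$ and $\ell_0(x,y)=\sqrt{x}$; then $T_0f\equiv\min_y\bigl(f(y)+\sqrt{y}\bigr)$, so $c_0=0$, and the subsolutions normalized by $v(0)=0$ are exactly the functions with $-\sqrt{x}\le v(x)\le 0$. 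The continuous subsolutions $v_n(x)=-\tfrac12\max\bigl(0,\,1-n|x-\tfrac34|\bigr)$ converge pointwise to a function vanishing everywhere except at $x=\tfrac34$, where the limit equals $-\tfrac12$: a discontinuous limit. So continuous subsolutions are in general neither equicontinuous nor closed under pointwise convergence within the space of all functions. Your proof is repaired either by dropping the continuity requirement (consistent with the paper's definition, in which case the whole second half of your second paragraph is unnecessary) or by reading ``closed under pointwise convergence'' as closedness inside $C^0(X,\R)$; as written, the asserted modulus of continuity does not exist.
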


An important tool we will use is that of {\bf Mather measure}. In all the paper, all measures are Borel measures even if not explicitly stated.

\begin{definition}\rm A Borel measure $\mu$ on $X\times X$ is closed if its marginals coincide: $\pi_{1*} \mu = \pi_{2*}\mu$, where $\pi_1(x,y) = x$ and $\pi_2(x,y)=y$.

If $v : X\to \R$ is a continuous subsolution, integrating the family of inequalities  $v(y)-v(x) \leqslant  \ell_0(x,y) + c_0$ against a closed probability measure $\mu$, we discover that $\int_{X\times X} \ell_0(x,y)d\mu (x,y) \geqslant -c_0$. This leads to the notion of Mather measures (or equivalently of minimizing measures):

\begin{definition} \rm
A Mather measure (or minimizing measure) $\mu$ is a probability measure on $X\times X$ that is closed and verifies
$$\int_{X\times X} \ell_0(x,y) d\mu(x,y) = -c_0.$$
We will denote by $\mathfrak M_0$ the set of Mather measures.
\end{definition}

Finally we will need an important function associated to $\ell_0$ called Peierls' barrier. If $n>0$, let
$$\forall(x,y)\in X\times X , \quad h_n(x,y) = \min_{\substack{ (x_0,\cdots , x_n)\in X^{n+1} \\x_0=x, x_n=y}} \sum_{k=0}^{n-1} \ell_0(x_k,x_{k+1}).$$
\begin{definition}
Peierls' barrier is the function $h : X\times X \to \R$ defined by
$$\forall (x,y)\in X\times X, \quad h(x,y) = \liminf_{n\to +\infty} h_n(x,y)+nc_0.$$
\end{definition}

Here are some key properties of Peierls' barrier

\begin{proposition}\label{proph}
\begin{enumerate}
\item The function $h$ is finite valued and continuous on $X\times X$.
\item For all $x \in X$, the function $h(x,\cdot)$ is a weak KAM solution and the function $-h(\cdot , x)$ is a subsolution.
\item If $v : X\to \R$ is any subsolution, then
$$\forall (x,y)\in X\times X, \quad v(y)-v(x) \leqslant h(x,y).$$
\end{enumerate}
\end{proposition}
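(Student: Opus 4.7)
\textbf{Proof plan for Proposition \ref{proph}.} I would tackle (3) first, use it to derive the lower bound of (1), then establish the upper bound of (1), continuity of $h$, and both parts of (2), in that order. The two main structural ingredients will be the equicontinuity of the family $(h_n+nc_0)_n$, inherited from the continuity modulus of $\ell_0$, and the compactness of $X$. Statement (3) is direct: summing the subsolution inequality $v(x_{k+1}) - v(x_k) \leq \ell_0(x_k, x_{k+1}) + c_0$ telescopically along any discrete path $x = x_0, \ldots, x_n = y$ and minimizing over paths gives $v(y) - v(x) \leq h_n(x,y) + nc_0$; passing to the $\liminf$ yields (3). Applying this to any weak KAM solution $u$ from Theorem \ref{weakKAM} (which is a subsolution) yields $h(x,y) \geq u(y) - u(x) > -\infty$, settling the lower bound of (1).

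For the upper bound in (1), I would exploit the iterated fixed-point identity $u = T_0^n u + nc_0$: for each $n$ and $y$, this identity admits a minimizing path $(x_0, \ldots, x_n = y)$ with $h_n(x_0, y) + nc_0 = u(y) - u(x_0) \in [-2\|u\|_\infty, 2\|u\|_\infty]$. Prepending a single step from an arbitrary $x \in X$ gives $h_{n+1}(x,y) + (n+1)c_0 \leq \|\ell_0\|_\infty + |c_0| + 2\|u\|_\infty$, uniformly in $x, y, n$; taking $\liminf$ yields $h(x,y) < +\infty$. Continuity of $h$ then follows from the equicontinuity of $(h_n+nc_0)_n$ on $X\times X$, which itself inherits a modulus of continuity from $\ell_0$ in its first and last arguments (swapping one endpoint of an optimal path only affects the corresponding first or last cost). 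The elementary bound $|\liminf_n a_n - \liminf_n b_n| \leq \sup_n |a_n - b_n|$ then transfers this modulus to $h$, so $h$ is continuous.

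For (2), the fact that $-h(\cdot, x)$ is a subsolution follows from $h_{n+1}(z,x) \leq \ell_0(z,y) + h_n(y,x)$, obtained by concatenating the one-step path $z \to y$ with an optimal $n$-step path from $y$ to $x$: after adjusting by $c_0$ and taking $\liminf$ in $n$ one gets $h(z,x) \leq \ell_0(z,y) + c_0 + h(y,x)$, which is the subsolution inequality for $-h(\cdot, x)$. For $h(x, \cdot)$ to be a weak KAM solution, the bound $h(x,y) \leq T_0[h(x,\cdot)](y) + c_0$ is analogous, using $h_{n+1}(x,y) \leq h_n(x, z) + \ell_0(z, y)$, taking $\liminf$ in $n$ and then infimum over $z$. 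The reverse inequality is the main obstacle: I would fix $y$, extract a subsequence $n_k$ realizing the $\liminf$ defining $h(x,y)$, choose minimizers $z_k$ in the recursion $h_{n_k}(x,y) = \min_z [h_{n_k - 1}(x, z) + \ell_0(z, y)]$, and extract $z_k \to z^*$ by compactness. Then equicontinuity of $h_n(x, \cdot) + nc_0$ in $y$ together with the definition of $\liminf$ yields $h(x,y) \geq h(x, z^*) + \ell_0(z^*, y) + c_0 \geq T_0[h(x, \cdot)](y) + c_0$. The only delicate step is this compactness-plus-equicontinuity argument at the end; everything else reduces to bookkeeping around path concatenation and telescoping sums.
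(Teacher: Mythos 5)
The paper does not prove Proposition \ref{proph}: it appears in Section \ref{secwk}, which only recalls classical discrete weak KAM facts and refers to \cite{Z,Z1,DFIZ} for proofs, so there is no in-paper argument to compare against. Your proposal is correct and is essentially the standard proof from those references: telescoping the subsolution inequality for (3), bounding $h_n+nc_0$ above via the fixed-point identity $u=T_0^nu+nc_0$ and below via (3), transferring the uniform modulus of continuity of $(h_n+nc_0)_n$ to the $\liminf$, and handling the only delicate point --- the inequality $h(x,y)\geqslant T_0[h(x,\cdot)](y)+c_0$ --- by extracting a subsequence realizing the $\liminf$ together with a convergent sequence of minimizers, which is exactly the right mechanism.
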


A crucial set in weak KAM theory is the projected Aubry set:

\begin{definition}\rm
The projected Aubry set is $\mathfrak A = \{x \in X , \ \ h(x,x)=0\}$.
\end{definition}
This set is proven to be non-empty.

We end this section with a comparison principle:

\begin{proposition}\label{comparison}
Let $u:X\to \R$ be a weak KAM solution and $v : X\to\R$ be a subsolution.
Assume that $u_{|\mathfrak A} \geq v_{|\mathfrak A}$ then $u\geqslant v$ on $X$.
\end{proposition}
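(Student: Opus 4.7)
The plan is to fix $x\in X$, build a backward calibrated chain for the weak KAM solution $u$ starting at $x$, and exploit the fact that $u-v$ is nondecreasing along such a chain.

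Since $u = T_0u + c_0$ and $X$ is compact with $\ell_0$ continuous, the minimum defining $T_0 u(y)$ is attained for every $y$. Setting $x_0 = x$ and recursively choosing $x_{-(n+1)}$ to realize the minimum in $u(x_{-n}) = T_0 u(x_{-n}) + c_0$, we obtain a backward chain $(x_{-n})_{n\geqslant 0}$ satisfying $u(x_{-n}) - u(x_{-(n+1)}) = \ell_0(x_{-(n+1)}, x_{-n}) + c_0$ for every $n\geqslant 0$. Combining this equality with the subsolution inequality $v(x_{-n}) - v(x_{-(n+1)})\leqslant \ell_0(x_{-(n+1)},x_{-n})+c_0$ and telescoping yields $(u-v)(x) \geqslant (u-v)(x_{-n})$ for every $n$.

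By compactness, $(x_{-n})$ has an accumulation point $\bar y$. Granting $\bar y\in \mathfrak A$, the hypothesis and continuity of $u-v$ immediately give $(u-v)(x)\geqslant (u-v)(\bar y)\geqslant 0$. To establish $\bar y\in \mathfrak A$, extract by a diagonal argument indices $n_k < m_k$ with both $x_{-n_k}\to \bar y$ and $x_{-m_k}\to \bar y$. Calibration gives
\[
\sum_{j=-m_k}^{-n_k-1}\ell_0(x_j,x_{j+1})+(m_k-n_k)c_0 \;=\; u(x_{-n_k}) - u(x_{-m_k}) \;\xrightarrow{k\to\infty}\; 0.
\]
Replacing the two endpoints of this calibrated segment by $\bar y$ produces a closed loop at $\bar y$ of length $m_k-n_k$; by uniform continuity of $\ell_0$ on the compact $X\times X$, its cost differs from the above sum by $o(1)$. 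Hence $h_{m_k-n_k}(\bar y,\bar y)+(m_k-n_k)c_0 \to 0$, so $h(\bar y,\bar y)\leqslant 0$. The opposite inequality $h(\bar y,\bar y)\geqslant 0$ follows from the subadditivity $h(p,q)+h(q,r)\geqslant h(p,r)$ (a direct consequence of $h_{n+m}(p,r)\leqslant h_n(p,q)+h_m(q,r)$ plus Proposition \ref{proph}(1)) applied at $p=q=r=\bar y$. Thus $h(\bar y,\bar y)=0$, i.e.\ $\bar y\in \mathfrak A$.

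The main obstacle is precisely this last step: showing that any backward calibrated chain accumulates on the Aubry set, which requires the diagonal extraction and the continuity-based perturbation of chains into loops. Once this is granted, the conclusion is just monotonicity of $u-v$ along the chain plus passage to the limit.
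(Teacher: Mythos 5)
The paper states this comparison principle without proof, as part of its recollection of classical discrete weak KAM theory, so there is no internal argument to compare against; your proof is the standard one for this statement and it is essentially correct. The structure — backward calibrated chain for $u$, monotonicity of $u-v$ along it, accumulation of the chain on $\mathfrak A$, then the hypothesis on $\mathfrak A$ — is exactly the classical route, and the perturbation of a calibrated segment into a loop based at $\bar y$ via uniform continuity of $\ell_0$ is the right way to get $h(\bar y,\bar y)\leqslant 0$ (the reverse inequality $h(\bar y,\bar y)\geqslant 0$ is even more immediate from Proposition \ref{proph}(3) applied to any subsolution, though your triangle-inequality argument also works).

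One point should be made explicit: since $h(\bar y,\bar y)$ is defined as $\liminf_{n\to+\infty} h_n(\bar y,\bar y)+nc_0$, concluding $h(\bar y,\bar y)\leqslant 0$ from $h_{m_k-n_k}(\bar y,\bar y)+(m_k-n_k)c_0\to 0$ requires the loop lengths $m_k-n_k$ to tend to $+\infty$ (or at least be unbounded); bounded lengths would say nothing about the liminf. This is trivially arranged — from a single convergent subsequence $x_{-p_j}\to\bar y$ take $n_k=p_k$ and $m_k=p_{2k}$ — but as written your ``diagonal argument'' does not guarantee it. With that one-line fix the proof is complete (granting, as the paper implicitly does elsewhere, that subsolutions are continuous, which you need to pass to the limit in $(u-v)(x)\geqslant (u-v)(x_{-n_k})$).
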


\end{definition}

\section{Discrete version of implicit semigroup}\label{Sec2}

Assume $(X,d)$ is a compact metric space where $d : X\times X \to \R$ is the distance function.  In the paper, $c:X\times X\times \mathbb R^2\to \mathbb R$ is a continuous function. Hypotheses that will be needed are the following
\begin{itemize}
\item[(Lu)] for each $(z,x,v)\in X\times X\times \mathbb R$, $u\mapsto c(z,x,u,v)$ is $\kappa_u$-Lipschitz continuous and $\kappa_u\leq 1$.
\item[(Lv)] for each $(z,x,u)\in X\times X\times \mathbb R$, $v\mapsto c(z,x,u,v)$ is $\kappa_v$-Lipschitz continuous and $\kappa_v<1$.
\end{itemize}

\begin{proposition}\label{exT}
Assume $c$ verifies hypothesis {\rm (Lv)}. For each continuous function $\varphi:X\to\mathbb R$, there is a unique continuous function $T\varphi: X\to \mathbb R$ satisfying
\[T\varphi(x)=\min_{z\in X}c\big(z,x,\varphi(z),T\varphi(x)\big).\]

The operator $T : (C^0(X,\R),\|\cdot \|_\infty)\to (C^0(X,\R),\|\cdot \|_\infty)$  is continuous and compact.

Moreover, if the family of functions $x\mapsto c(z,x,u,v)$ is locally equi-Lipschitz continuous, $T\varphi(x)$ is Lipschitz continuous.
\end{proposition}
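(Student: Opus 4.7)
The plan is to first solve the implicit equation pointwise by Banach's fixed-point theorem, then to promote this to continuity and compactness of $T$ via a uniform Arzela--Ascoli argument. The only structural subtlety is that hypothesis (Lv) is a Lipschitz bound only in the variable $v$; all regularity estimates in the $x$ and $u$ variables will have to come from uniform continuity of $c$ on a suitable compact set, so one must first pin down the a priori box in which the solutions live.

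Fix $x \in X$ and $\varphi \in C^0(X,\R)$ and define $F_{x,\varphi}(v) := \min_{z\in X} c(z,x,\varphi(z),v)$, which is attained since $X$ is compact and $c$ is continuous. Hypothesis (Lv) makes $F_{x,\varphi}$ a $\kappa_v$-Lipschitz self-map of $\R$: compare the two minima by inserting a minimizer for one value of $v$ into the defining expression for the other, and symmetrize. Since $\kappa_v<1$, Banach's contraction theorem yields a unique $T\varphi(x) \in \R$ satisfying the implicit equation. Comparing $T\varphi(x)=F_{x,\varphi}(T\varphi(x))$ with $F_{x,\varphi}(0)$ gives the a priori bound $\|T\varphi\|_\infty \leq M_R/(1-\kappa_v)$, where $M_R := \max\{|c(z,x,u,0)| : (z,x)\in X\times X,\ |u|\leq R\}$ is finite as soon as $\|\varphi\|_\infty \leq R$. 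Everything that follows takes place on the compact box $K_R := X\times X \times [-R,R]\times\bigl[-M_R/(1-\kappa_v),\,M_R/(1-\kappa_v)\bigr]$, on which $c$ is uniformly continuous.

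For continuity in $x$, let $z_2$ minimize the defining expression at $x_2$. Plugging $z_2$ into the inequality at $x_1$ and using (Lv) yields
\[T\varphi(x_1)-T\varphi(x_2) \leq \kappa_v\,|T\varphi(x_1)-T\varphi(x_2)| + \omega^x_R\bigl(d(x_1,x_2)\bigr),\]
where $\omega^x_R$ is the modulus of continuity of $c$ in $x$ on $K_R$. Exchanging the roles of $x_1$ and $x_2$ gives $T\varphi$ a modulus of continuity $\omega^x_R/(1-\kappa_v)$ depending only on $R$. The identical comparison, now varying $\varphi$ instead of $x$ and using the modulus $\omega^u_R$ of continuity of $c$ in $u$ on $K_R$, yields $\|T\varphi_1-T\varphi_2\|_\infty \leq \omega^u_R(\|\varphi_1-\varphi_2\|_\infty)/(1-\kappa_v)$, so $T$ is continuous.

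Since both the uniform bound $M_R/(1-\kappa_v)$ and the modulus $\omega^x_R/(1-\kappa_v)$ depend only on $R$, the image $T(B_R)$ of the closed ball of radius $R$ is uniformly bounded and equicontinuous, hence relatively compact by Arzela--Ascoli; thus $T$ is compact. Finally, in the locally equi-Lipschitz case, one may take $\omega^x_R(t)=Lt$ on $K_R$, and the estimate above upgrades to $T\varphi$ being $L/(1-\kappa_v)$-Lipschitz continuous. No step is genuinely hard; the only thing to watch is the order of the argument, namely extracting the a priori bound \emph{before} the continuity estimates so that the uniform continuity of $c$ on $K_R$ is available.
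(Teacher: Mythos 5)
Your proof is correct. The overall strategy --- a Banach contraction in the $v$-slot made possible by $\kappa_v<1$, then an a priori sup bound, equicontinuity, and Arzela--Ascoli --- is the same as the paper's, but the mechanics differ at three points, each in the direction of being more direct and more quantitative. First, you run the contraction pointwise in $\R$ for each fixed $x$ (the map $v\mapsto F_{x,\varphi}(v)$), whereas the paper contracts the operator $\mathcal A f(x)=\min_{z}c\big(z,x,\varphi(z),f(x)\big)$ on $(C^0(X,\R),\|\cdot\|_\infty)$, which hands it continuity of the fixed point for free; your route therefore owes a separate proof that $x\mapsto T\varphi(x)$ is continuous, and you supply it. Second, that continuity proof is a one-shot absorption argument --- insert the minimizer at $x_2$ into the inequality at $x_1$, peel off the term $\kappa_v\,|T\varphi(x_1)-T\varphi(x_2)|$, symmetrize, and divide by $1-\kappa_v$ --- while the paper reaches the same modulus $\omega/(1-\kappa_v)$ by inducting on the Picard iterates $f_n$ and summing the geometric series $(1+\kappa_v+\cdots+\kappa_v^n)\omega$. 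Third, for continuity of $T$ you prove the quantitative estimate $\|T\varphi_1-T\varphi_2\|_\infty\le \omega^u_R(\|\varphi_1-\varphi_2\|_\infty)/(1-\kappa_v)$, which is strictly more informative than the paper's qualitative argument (precompactness of the sequence $(T\varphi_{k_n})$ plus uniqueness of the limit of the implicit equation): it gives uniform continuity of $T$ on bounded sets. Your insistence on pinning down the compact box $K_R$ before invoking any uniform continuity of $c$ is exactly the right precaution, the pointwise uniqueness you obtain is if anything stronger than uniqueness among continuous functions, and the Lipschitz upgrade at the end is handled correctly.
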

\begin{proof}
We first prove that $T\varphi(x)$ exists. For a continuous function $f: X\to \mathbb R$, define
\[\mathcal{A} f(x)=\min_{z\in X}c\big(z,x,\varphi(z),f(x)\big).\]
By the continuity of $c$ and $f$ and compactness of $X$, we see that $\mathcal A$ is an operator from $C^0(X,\R)$ to itself. Indeed, $\mathcal{A} f$ is an infimum of equicontinuous functions. We are going to find a fixed point of $\mathcal A$. We take two continuous functions $f$ and $g$ on $X$. By compactness of $X$, let $z$ be a minimal point realizing the minimum in the definition of $\mathcal Ag(x)$, then we have
\[\mathcal Af(x)-\mathcal Ag(x)\leq c\big(z,x,\varphi(z),f(x)\big)-c\big(z,x,\varphi(z),g(x)\big)\leq \kappa_v \|f-g\|_\infty.\]
Exchanging the role of $f$ and $g$, we get that $\mathcal A$ is a contraction in $(C^0(X,\R),\|\cdot\|_\infty)$, since $\kappa_v<1$. By the Banach fixed point theorem, there is a unique fixed point of $\mathcal A$, which is $T\varphi(x)$.

Then we prove the operator $T$ is compact. Let $r>0$ and $\varphi \in C^0(X,\R)$ such that $\|\varphi\|_\infty<r$. Consider the sequence $(f_n)_{n\in\mathbb N}$ with $f_0=0$ and
\[f_{n+1}(x)=\min_{z\in X}c\big(z,x,\varphi(z),f_n(x)\big).\]
Since $\mathcal A$ is a contraction, the sequence $f_n$ converges to $ T\varphi$ uniformly. We also have
\begin{align*}
\|f_{n+1}\|_\infty&\leq \sum_{i=0}^n\|f_{i+1}-f_i\|_\infty\leq \sum_{i=0}^n(\kappa_v)^i\|f_1\|_\infty
\\ &\leq \frac{1}{1-\kappa_v}\|f_1\|_\infty\leq \frac{1}{1-\kappa_v}\max_{\substack{z\in X, x\in X\\ |u|\leq r}}|c(z,x,u,0)|=:R_0.
\end{align*}
Define $R:=\max\{R_0,r\}$. Let $\omega $ be a modulus of continuity of $c$ restricted to the compact set $X\times X \times [-R,R]^2$. For each $x,y\in X$, let $z_y$ be a minimal point in the definition of $f_1(y)$, we have
\[f_1(x)-f_1(y)\leq c(z_y,x,\varphi(z_y),0)-c(z_y,y,\varphi(z_y),0)\leq \omega\big(d(x,y)\big).\]
Exchanging $x$ and $y$, we have
\[|f_1(x)-f_1(y)|\leq \omega\big(d(x,y)\big).\]

More generally, if $f\in C^0(X,\R)$ is such that $\|f\|_\infty <R$, and if $\omega_f$ is a modulus of continuity of $f$, then for $x,y\in X$,

\begin{multline*}
\mathcal Af(x) - \mathcal Af(y) \leqslant c\big(z_y,x,\varphi(z'_y),f(x)\big)-c\big(z'_y,y,\varphi(z_y),f(y)\big) \\
=   c\big(z_y,x,\varphi(z'_y),f(x)\big) -c\big(z_y,x,\varphi(z'_y),f(y)\big)\\
 +c\big(z_y,x,\varphi(z'_y),f(y)\big) -c\big(z'_y,y,\varphi(z_y),f(y)\big) \\
\leqslant\kappa_v \omega_f(x,y) + \omega(x,y).
\end{multline*}
It follows, by exchanging the roles of $x$ and $y$, that $\kappa_v \omega_f + \omega$ is a modulus of continuity of $\mathcal A f$. Applying to the sequence $(f_n)_n$ we obtain by induction that $f_{n+1}$ has $(1+\kappa_v+\cdots +\kappa_v^n)\omega$ as modulus of continuity. Hence the whole sequence is equicontinuous with modulus $\frac{\omega}{1-\kappa_v}$ and so is $T \varphi$. As $\|T\varphi\|_\infty \leqslant R_0$ and $R_0$ only depends on $r$, this proves that $T$ is compact by the Arzela-Ascoli theorem. Next we prove that $T$ is continuous. Let $(\varphi_n)_n$ be a sequence converging to $\varphi$. By the previous point, the sequence $(T \varphi_n)_n$ is precompact. Let $(k_n)_n$ be an extraction such that $(T \varphi_{k_n})_n$ uniformly converges to a function $\psi$. Then if $x\in X$ we can pass to the limit in the relations
\[T\varphi_{k_n}(x)=\min_{z\in X}c\big(z,x,\varphi_{k_n}(z),T\varphi_{k_n}(x)\big)\] to obtain
\[\psi(x)=\min_{z\in X}c\big(z,x,\varphi(z),\psi(x)\big)\] and by uniqueness, $\psi = T\varphi$. This proves that $T$ is continuous.

Now we prove the Lipschitz continuity of $T\varphi$ under the additional Lipschitz assumption of $c$ with respect to $x$. Let $\kappa^R_x$ be the Lipschitz constant of $x\mapsto c(z,x,u,v)$ for $|u|$ and $|v|$ bounded by $R>0$. Applying the previous method, we obtain that if $f_n$ is $\kappa_n $-Lipschitz, then $f_{n+1} $ is $\kappa_{n+1}$-Lipschitz with $\kappa_{n+1}=\kappa^R_x+\kappa_v\kappa_n$. Therefore, $(f_n)_n$ is equi-Lipschitz continuous with constant $\frac{\kappa^R_x}{1-\kappa_v}$, and uniformly converges to $T\varphi$. Then $T\varphi$ is Lipschitz continuous. \end{proof}

Now we assume
\begin{itemize}
\item[(M)] $u\mapsto c(z,x,u,v)$ is non-decreasing and $v\mapsto c(z,x,u,v)$ is non-increasing.
\end{itemize}
\begin{proposition}(Order preserving). \label{ord}
If $f\leq g$, then $Tf\leq Tg$.
\end{proposition}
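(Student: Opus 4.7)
The plan is to argue by contradiction. Suppose there is some $x_0\in X$ with $Tf(x_0)>Tg(x_0)$; I want to derive a numerical contradiction at this single point, with no need to choose $x_0$ as a maximizer of $Tf-Tg$. By compactness of $X$ and continuity of $c$, the minimum defining $Tg(x_0)$ is attained at some $z_0\in X$, i.e.
\[Tg(x_0)=c\big(z_0,x_0,g(z_0),Tg(x_0)\big).\]
Since $z_0$ is an admissible competitor in the minimization that defines $Tf(x_0)$, the defining relation of $T$ from Proposition \ref{exT} yields
\[Tf(x_0)\leq c\big(z_0,x_0,f(z_0),Tf(x_0)\big).\]

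Subtracting these two relations gives
\[Tf(x_0)-Tg(x_0)\leq c\big(z_0,x_0,f(z_0),Tf(x_0)\big)-c\big(z_0,x_0,g(z_0),Tg(x_0)\big).\]
The plan is now to insert the intermediate term $c(z_0,x_0,g(z_0),Tf(x_0))$ and apply (M) twice. Because $f(z_0)\leq g(z_0)$ and $u\mapsto c(z_0,x_0,u,Tf(x_0))$ is non-decreasing, the first difference $c(z_0,x_0,f(z_0),Tf(x_0))-c(z_0,x_0,g(z_0),Tf(x_0))$ is $\leq 0$. Because $Tf(x_0)>Tg(x_0)$ and $v\mapsto c(z_0,x_0,g(z_0),v)$ is non-increasing, the second difference $c(z_0,x_0,g(z_0),Tf(x_0))-c(z_0,x_0,g(z_0),Tg(x_0))$ is also $\leq 0$. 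Adding gives $Tf(x_0)-Tg(x_0)\leq 0$, contradicting the assumption.

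There is no real obstacle here, but one point worth stressing is that one should \emph{not} try to prove the statement by a monotone iteration starting from the Banach fixed point scheme of Proposition \ref{exT}. Indeed, the associated map $\mathcal A_\varphi : f\mapsto \min_z c(z,\cdot,\varphi(z),f(\cdot))$ is not order-preserving: the non-increasing dependence in $v$ makes $\mathcal A_\varphi$ order-\emph{reversing} in $f$. Order-preservation of $T$ is therefore a genuine property of the fixed point, obtained by playing the non-decreasing dependence in $u$ against the non-increasing dependence in $v$; this is precisely what the two-term telescoping above accomplishes.
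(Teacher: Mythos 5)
Your proof is correct and is essentially identical to the paper's: the authors also argue by contradiction, take a minimizer $z_g$ for $Tg(x)$, and use both monotonicities of (M) in the chain $Tf(x)\leq c(z_g,x,f(z_g),Tf(x))\leq c(z_g,x,g(z_g),Tg(x))=Tg(x)$; you merely spell out the intermediate term explicitly. Your closing remark about the order-reversal of the auxiliary map $\mathcal A$ is accurate but not needed.
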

\begin{proof}
We argue by contradiction. Assume there is $x\in X$ such that $Tf(x)>Tg(x)$. Let $z_g$ be a  point realizing the definition of $Tg(x)$, then we have
\[Tf(x)\leq c\big(z_g,x,f(z_g),Tf(x)\big)\leq c\big(z_g,x,g(z_g),Tg(x)\big)=Tg(x),\]
which leads to a contradiction.
\end{proof}

 To end this section, assume now that the three hypotheses (Lu), (Lv) and (M) are satisfied:

\begin{proposition}\label{nonex}(Non-expensiveness).
For each $f,g$, we have $\|Tf-Tg\|_\infty\leq \|f-g\|_\infty$.
\end{proposition}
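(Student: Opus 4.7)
The plan is to prove the one-sided bound $Tf(x)-Tg(x)\leqslant \|f-g\|_\infty$ for every $x\in X$; the reverse inequality then follows by exchanging the roles of $f$ and $g$, giving the full statement. Set $M:=\|f-g\|_\infty$ and argue by contradiction: suppose there exists $x_0\in X$ with $Tf(x_0)>Tg(x_0)+M$. As in the proof of Proposition \ref{ord}, the natural object to introduce is a minimizer $z_g\in X$ realizing the definition of $Tg$ at $x_0$, so that
\[Tg(x_0)=c\big(z_g,x_0,g(z_g),Tg(x_0)\big),\]
while the ``sub-optimal'' inequality for $Tf$ gives $Tf(x_0)\leqslant c\big(z_g,x_0,f(z_g),Tf(x_0)\big)$.

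The core of the proof is then to chain three elementary steps on this same point $z_g$. First, the contradiction hypothesis supplies $Tf(x_0)>Tg(x_0)$, so using that $v\mapsto c(z_g,x_0,f(z_g),v)$ is non-increasing by (M), one may replace $Tf(x_0)$ by $Tg(x_0)$ in the right-hand side and only increase it. Second, since $f(z_g)\leqslant g(z_g)+M$ and $u\mapsto c(z_g,x_0,u,Tg(x_0))$ is non-decreasing by (M), replacing $f(z_g)$ by $g(z_g)+M$ also only increases it. Third, hypothesis (Lu) converts this last shift into an additive error bounded by $\kappa_u M$. Putting these three inequalities together gives
\[Tf(x_0)\leqslant c\big(z_g,x_0,g(z_g),Tg(x_0)\big)+\kappa_u M=Tg(x_0)+\kappa_u M,\]
and using $\kappa_u\leqslant 1$ yields $Tf(x_0)\leqslant Tg(x_0)+M$, contradicting the choice of $x_0$.

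The main subtlety, and the reason one cannot simply mimic the proof of Proposition \ref{ord}, is the implicit appearance of $Tf(x_0)$ and $Tg(x_0)$ inside $c$: one cannot compare these two values directly without first knowing which is bigger. The contradiction hypothesis is precisely what unlocks the monotonicity of $c$ in its fourth argument and lets the comparison proceed. The constraint $\kappa_u\leqslant 1$ in (Lu) is sharp here, as it is what lets the error term $\kappa_u M$ be reabsorbed into $M$; the strictness $\kappa_v<1$ in (Lv) is not needed for this estimate but was already invoked in Proposition \ref{exT} to guarantee the existence of $T\varphi$, so the conclusion holds in the full setting where $T$ is defined.
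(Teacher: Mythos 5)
Your proof is correct and follows essentially the same route as the paper: argue by contradiction at a point where $Tf-Tg$ exceeds $\|f-g\|_\infty$, test $Tf$ against the minimizer $z_g$ of $Tg$, use the monotonicity in $v$ from (M) (unlocked by the contradiction hypothesis $Tf(x_0)>Tg(x_0)$) and the Lipschitz bound (Lu) with $\kappa_u\leqslant 1$ to absorb the shift in the $u$-slot. The only difference is the order in which the two slots are handled (and your intermediate use of monotonicity in $u$ before applying (Lu), where the paper applies the Lipschitz estimate directly), which is immaterial.
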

\begin{proof}
We are going to prove $Tf(x)-\|f-g\|_\infty-Tg(x)\leq 0$ for each $x\in X$. We argue by contradiction. Assume there is $x\in X$ such that $Tf(x)-\|f-g\|_\infty-Tg(x)>0$. Let $z_g$ be a  point realizing the definition of $Tg(x)$. Then we have
\begin{align*}
&Tf(x)-\|f-g\|_\infty-Tg(x)
\\ & \leq c\big(z_g,x,f(z_g),Tf(x)\big)-\|f-g\|_\infty-c\big(z_g,x,g(z_g),Tg(x)\big)
\\ & \leq c\big(z_g,x,g(z_g),Tf(x)\big)-c\big(z_g,x,g(z_g),Tg(x)\big)\leq 0,
\end{align*}
which leads to a contradiction. For the last inequality, we use (Lu). Exchanging $f$ and $g$, and then the proof is complete.
\end{proof}


\section{Discounted solutions}\label{secdisc}

Now we consider the discounted problem. Let $\ell:X\times X\times \mathbb R^2\to\mathbb R$ be continuous and satisfy
\begin{itemize}
\item [(l1)] $\ell(z,x,u,v)$  is  $\kappa_u$-Lipschitz in $u$ and $\kappa_v$-Lipschitz in $v$.
\item [(l2)] $\ell(z,x,u,v)$ is non-increasing in $u$ and $v$.
\item [(l3)] $\partial_u \ell(z,x,0,0)$ and $\partial_v \ell(z,x,0,0)$ exist, and \[|\ell(z,x,v,u)-\ell(z,x,0,0)-\partial_u \ell(z,x,0,0)u-\partial_v \ell(z,x,0,0)v|\leq \frac{|u|+|v|}{2}\eta(\frac{|u|+|v|}{2}),\] where $\eta$ is a modulus of continuity.
\item [(l4)] $\int_{X\times X}\big(\partial_u \ell(z,x,0,0)+\partial_v \ell(z,x,0,0)\big)d\mu(z,x)<0$ for all Mather measures $\mu$ of $\ell(\cdot,\cdot,0,0)$.
\end{itemize}
{\bf We assume the critical value of $(x,z)\mapsto \ell(z,x,0,0)$ equals zero\footnote{If this is not the case,  all our results apply to the function   $\tilde \ell = \ell-c_0$ where $c_0$ is the critical constant of $(x,z)\mapsto \ell(z,x,0,0)$.}.}

Note that under hypothesis (l4), both functions $(z,x) \mapsto \partial_u \ell(z,x,0,0)$ and $(z,x) \mapsto \partial_v \ell(z,x,0,0)$ are continuous as uniform limits of continuous functions.

Let $\lambda_0>0$ such that $\lambda_0 \max(\kappa_u,\kappa_v) < 1$.  For $\lambda_0> \lambda>0$, $u+\ell(z,x,\lambda u,\lambda v)$ satisfies the basic assumptions (Lu), (Lv) and (M) for $c(z,x,u,v)$ in Section \ref{Sec2}.
 Define for $\varphi \in C^0(X,\R)$
\[T_\lambda \varphi(x)=\min_{z\in X} \big\{\varphi(z)+\ell\big(z,x,\lambda \varphi(z),\lambda T_\lambda \varphi(x)\big)\big\}.\]
This is well defined by Proposition \ref{exT}.

\underline{\bf Example}: If $\ell(z,x,u,v)$ is of the form $\ell_0(z,x)-\alpha(z)u$, where $\alpha : X\to \R$ is a non-negative function, we have
\[T_\lambda\varphi(x)=\min_{z\in X}\big\{\big(1-\lambda \alpha(z)\big)\varphi(z)+\ell_0(z,x)\big\}=T_0\big((1-\lambda\alpha)\varphi\big)(x),\]
which is the degenerate vanishing discount problem as treated in \cite{Z}.

\begin{proposition}\label{bdd}
For $\lambda<\lambda_0$, the operator $T_\lambda$ admits at least one fixed point $u_\lambda$.

Moreover,  the  family of all such fixed points $(u_\lambda)_{\lambda\in (0,\lambda_0)}$ is uniformly bounded and equi-continuous.
\end{proposition}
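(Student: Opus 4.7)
The plan is to establish existence via Schauder's theorem on an invariant order interval and then derive the uniform estimates by integrating against a Mather measure and exploiting the strict negativity in (l4). For existence, I would construct a strong super-solution $\bar w$ and a strong sub-solution $\bar v$ of $T_\lambda$ with $\bar v \le \bar w$. Take a weak KAM solution $w$ of $\ell_0 := \ell(\cdot,\cdot,0,0)$ (Theorem \ref{weakKAM}, recalling $c_0 = 0$), any subsolution $v$ of $\ell_0$ (for instance $v = w$ itself), and set $\bar w := w + K$, $\bar v := v - K$ for $K$ large enough that $\bar w \ge 0 \ge \bar v$. To see that $T_\lambda \bar w \le \bar w$, suppose otherwise: at a point $x_0$ where $\psi := T_\lambda \bar w$ satisfies $\psi(x_0) > \bar w(x_0) \ge 0$, one has $\lambda \psi(x_0) > 0$ and $\lambda \bar w(z) \ge 0$ for every $z$, and the monotonicity (l2) of $\ell$ gives
\[\ell\bigl(z, x_0, \lambda \bar w(z), \lambda \psi(x_0)\bigr) \le \ell_0(z, x_0) \qquad \text{for every } z \in X;\]
taking the minimum over $z$ in the defining equation of $\psi$ produces $\psi(x_0) \le T_0 \bar w(x_0) = \bar w(x_0)$ (using $T_0 w = w$), a contradiction. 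A symmetric argument yields $T_\lambda \bar v \ge \bar v$. The order preservation (Proposition \ref{ord}) makes the closed convex interval $[\bar v, \bar w] \subset C^0(X,\R)$ invariant under $T_\lambda$, and the compactness of $T_\lambda$ (Proposition \ref{exT}) makes the closed convex hull of $T_\lambda([\bar v, \bar w])$ a compact convex $T_\lambda$-invariant set to which Schauder's theorem applies, producing a fixed point $u_\lambda \in [\bar v, \bar w]$ with norm bounded independently of $\lambda$.

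For the uniform bound on any fixed point of $T_\lambda$ (not just the one constructed above), I would integrate the consequence $u_\lambda(x) - u_\lambda(z) \le \ell(z, x, \lambda u_\lambda(z), \lambda u_\lambda(x))$ of the fixed-point relation against a Mather measure $\mu$ of $\ell_0$. Closedness of $\mu$ together with $\int \ell_0 \, d\mu = 0$ gives $0 \le \int \ell(z, x, \lambda u_\lambda(z), \lambda u_\lambda(x)) \, d\mu$, and the Taylor estimate (l3) linearizes this into
\[\int \bigl( u_\lambda(z) \partial_u \ell(z,x,0,0) + u_\lambda(x) \partial_v \ell(z,x,0,0) \bigr) d\mu \ge -\|u_\lambda\|_\infty \, \eta\bigl(\lambda \|u_\lambda\|_\infty\bigr).\]
Since the two partial derivatives are non-positive by (l2), one can bound the left-hand side from above by $\min_X u_\lambda \cdot \int\bigl(\partial_u \ell + \partial_v \ell\bigr) d\mu = -\alpha \min_X u_\lambda$, where $\alpha := -\int(\partial_u \ell + \partial_v \ell) d\mu > 0$ by (l4). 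Rearranging yields $\min_X u_\lambda \le \|u_\lambda\|_\infty \eta(\lambda \|u_\lambda\|_\infty)/\alpha$, and combined with the oscillation bound $\max u_\lambda - \min u_\lambda \le D$ from the equicontinuity below (plus the symmetric argument bounding $-\max u_\lambda$) this produces a uniform bound on $\|u_\lambda\|_\infty$.

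The equicontinuity of any fixed point with bounded $\lambda u_\lambda$ follows by a now-classical computation. Picking a minimizer $z_2$ realizing $u_\lambda(x_2) = T_\lambda u_\lambda(x_2)$ and using it as a test point in the sub-optimality $u_\lambda(x_1) \le T_\lambda u_\lambda(x_1)$, one obtains
\[(1 - \lambda \kappa_v)\,|u_\lambda(x_1) - u_\lambda(x_2)| \le \bigl|\ell(z_2, x_1, \lambda u_\lambda(z_2), \lambda u_\lambda(x_1)) - \ell(z_2, x_2, \lambda u_\lambda(z_2), \lambda u_\lambda(x_2))\bigr|,\]
and the right-hand side is controlled by the modulus of continuity of $\ell$ in its second variable, uniformly on the compact set $X \times X \times [-\lambda_0 M_0, \lambda_0 M_0]^2$, where $M_0$ is the uniform bound from the previous paragraph. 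The main technical hurdle is to close the bootstrap between the two estimates: the Mather bound on $\|u_\lambda\|_\infty$ depends on the oscillation $D$, which requires equicontinuity, which in turn requires an a priori bound on $\lambda u_\lambda$ to fix the modulus of continuity of $\ell$ in $x$. Closing this loop involves choosing a scale $R$ small enough that $\eta(R) < \alpha$ and restricting $\lambda_0$ if necessary so that the implied bound on $\|u_\lambda\|_\infty$ is consistent with $\lambda_0 \|u_\lambda\|_\infty \le R$; this is where the smallness of $\eta$ near zero and the strict non-degeneracy in (l4) jointly play their essential role.
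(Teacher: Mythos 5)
Your existence argument is sound and close in spirit to the paper's: the paper also sandwiches between a nonnegative weak KAM solution $\bar u$ (showing $T_\lambda\bar u\leq\bar u$) and a nonpositive one $\underline u$ (showing $T_\lambda\underline u\geq\underline u$), then produces the fixed point by monotone iteration and Arzel\`a--Ascoli rather than Schauder; either route works. The problem is your second step, the uniform bound on \emph{all} fixed points, which is the heart of the proposition and the only place (l4) enters. Your estimate $\min_X u_\lambda\leq\|u_\lambda\|_\infty\,\eta(\lambda\|u_\lambda\|_\infty)/\alpha$ is correctly derived, but it is only useful when $\eta(\lambda\|u_\lambda\|_\infty)<\alpha$, and your proposed way of guaranteeing this --- ``restricting $\lambda_0$ so that $\lambda_0\|u_\lambda\|_\infty\leq R$'' --- presupposes exactly the bound on $\|u_\lambda\|_\infty$ you are trying to prove. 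Nothing you have written excludes a fixed point of enormous norm for some arbitrarily small $\lambda$ (the sets $S_\lambda$ are not known to vary continuously in $\lambda$), so the bootstrap does not close. The same circularity infects the oscillation bound $D$: the modulus of continuity of $x\mapsto\ell(z,x,u,v)$ is uniform only for $(u,v)$ in a compact set, so the equicontinuity estimate also needs $\lambda\|u_\lambda\|_\infty$ bounded beforehand.

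A second, independent gap: there is no ``symmetric argument bounding $-\max u_\lambda$.'' For an arbitrary pair $(z,x)$ the only available inequality is the one-sided subsolution inequality $u_\lambda(x)-u_\lambda(z)\leq\ell\big(z,x,\lambda u_\lambda(z),\lambda u_\lambda(x)\big)$; integrated against a Mather measure of $\ell_0$ it yields an upper bound on $\min_X u_\lambda$ and nothing in the reverse direction, so no lower bound on $u_\lambda$ follows. A lower bound requires exploiting the \emph{equality} at minimizing points, i.e.\ following a backward calibrated orbit. This is exactly what the paper's Step 2 does: assuming $\max(u_\lambda-\bar u)>0$ (resp.\ $\min(u_\lambda-\underline u)<0$), it shows the extremum propagates along a calibrated orbit, that $\ell(z,x,\cdot,\cdot)$ is constant on suitable intervals along that orbit, and that the limiting empirical measure is a Mather measure on whose support $\partial_u\ell(\cdot,\cdot,0,0)=\partial_v\ell(\cdot,\cdot,0,0)=0$, contradicting (l4) \emph{qualitatively}. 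No smallness of $\lambda\|u_\lambda\|_\infty$ is ever needed there, which is precisely what lets the bound hold for every $\lambda<\lambda_0$ with $\lambda_0$ fixed in advance by $\lambda_0\max(\kappa_u,\kappa_v)<1$. You would need to replace your quantitative Mather-measure estimate by an argument of this calibrated-orbit type.
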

\begin{proof}
{\bf Step 1.} We first prove the existence of $u_\lambda$ for $\lambda< \lambda_0$. By the discrete weak KAM theorem, $T_0$ has a fixed point $u$ (recall the critical constant is $0$). Since
\[T_0(u+k)=T_0u+k=u+k,\quad \forall k\in\mathbb R,\]
we can choose $\bar u\geq 0$ with $T_0\bar u=\bar u$. We prove that $T_\lambda \bar u\leq \bar u$. Assume there is a point $x\in X$ such that $T_\lambda \bar u(x)>\bar u(x)$. Let $z$ be a point realizing the minimum in the definition of $T_0\bar u(x)$, we have
\[T_\lambda\bar u(x)\leq \bar u(z)+\ell\big(z,x,\lambda \bar u(z),\lambda T_\lambda \bar u(x)\big)\leq \bar u(z)+\ell(z,x,0,0)=T_0\bar u(x)=\bar u(x),\]
which leads to a contradiction. 
By Proposition \ref{ord}, we have
\[\bar u\geq T_\lambda \bar u\geq T_\lambda\circ T_\lambda \bar u\geq \dots.\]
Similarly, let $\underline u$ be a negative weak KAM solution, we have
\[\underline u\leq T_\lambda \underline{u}\leq T_\lambda\circ T_\lambda \underline{u}\leq \dots.\]
Since $\underline{u}\leq \bar u$, we have for all $n\geqslant 0$,
\[\underline u\leq T^n_\lambda \underline{u}\leq T^n_\lambda\bar u\leq \bar u.\]
Now we show that $T^n_\lambda\underline u$ is equi-continuous for all $n\geq 1$ and $\lambda<\lambda_0$. Let $\omega$ be a modulus of continuity of $\ell$ restricted to $X\times X\times [-M,M]^2$ where $M>\lambda_0 \max(\|\bar u\|_\infty,  \|\underline u \|_\infty)$. By symmetry of the roles of $x$ and $y$, assume without loss of generality $T^n_\lambda \underline u(x)\geq T^n_\lambda \underline u(y)$, let $z$ be  a minimal point in the definition of  $T_\lambda(T^{n-1}_\lambda \underline u)(y)$, then by (l1) we have
\begin{align*}
|T^n_\lambda \underline u(x)-T^n_\lambda \underline u(y)| &= T^n_\lambda \underline u(x)-T^n_\lambda \underline u(y) \\
&\leq \ell\big(z,x,\lambda T^{n-1}_\lambda \underline u(z),\lambda T^n_\lambda \underline u(x)\big)-\ell\big(z,y,\lambda T^{n-1}_\lambda \underline u(z),\lambda T^n_\lambda \underline u(y)\big)
\\ &\leq \ell\big(z,x,\lambda T^{n-1}_\lambda \underline u(z),\lambda T^n_\lambda \underline u(y)\big)-\ell\big(z,y,\lambda T^{n-1}_\lambda \underline u(z),\lambda T^n_\lambda \underline u(y)\big)
\\ &\leq \omega \big(d(x,y)\big).
\end{align*}

 We finally get the equi-continuity of $T^n_\lambda\underline u$. Then $T^n_\lambda \underline u$ uniformly converges to a function $\tilde u_\lambda$. We have
\begin{align*}
\|T_\lambda \tilde u_\lambda-\tilde u_\lambda\|_\infty &\leq \|T_\lambda \tilde u_\lambda-T^n_\lambda \underline{u}\|_\infty+\|T^n_\lambda \underline{u}-\tilde u_\lambda\|_\infty
\\ &\leq \|\tilde u_\lambda-T^{n-1}_\lambda \underline{u}\|_\infty+\|T^n_\lambda \underline{u}-\tilde u_\lambda\|_\infty\to 0.
\end{align*}
Then $\tilde u_\lambda$ is a fixed point of $T_\lambda$, and $\underline u\leq \tilde u_\lambda\leq \bar u$.

\medskip
\noindent {\bf Step 2.} We prove for $\lambda<\lambda_0$, all such fixed points $u_\lambda$ are uniformly bounded, more precisely, $\underline u\leq  u_\lambda\leq \bar u$.
. We prove that $u_\lambda\leq \bar u$, the lower bound of $u_\lambda$ is similar. Assume there is $x_0\in X$ such that
\[u_\lambda(x_0)-\bar u(x_0)=\max_{x\in X}\big(u_\lambda(x)-\bar u(x)\big)>0.\]
Let $(x_{-k})_{k\in \mathbb N}$ be a sequence obtained inductively such that for all $k\geqslant 0 $, $x_{-k-1}$ is  a point realizing the minimum in the definition of $T_0\bar u(x_{-k})$. It follows that for all $k\geqslant 0$, equatlity $\bar u(x_{-k}) -\bar u(x_{-k-1}) = \ell(x_{-k-1}, x_{-k},0,0)$ holds. We first show that if $\lambda<\lambda_0$, we have $u_\lambda(x_{-k})>\bar u(x_{-k})$ for all $k\geq 0$. Assume $u_\lambda(x_{-1})\leq \bar u(x_{-1})$. By (l1) and (l2), we have
\begin{align*}
u_\lambda(x_0)-u_\lambda(x_{-1})&\leq \ell\big(x_{-1},x_0,\lambda u_\lambda(x_{-1}),\lambda u_\lambda(x_0)\big)
\\ &\leq \ell\big(x_{-1},x_0,\lambda u_\lambda(x_{-1}),\lambda \bar u(x_0)\big)
\\ &\leq \ell\big(x_{-1},x_0,\lambda \bar u(x_{-1}),\lambda \bar u(x_0)\big)+\lambda \kappa_u(\bar u-u_\lambda)(x_{-1})
\\ &\leq \ell(x_{-1},x_0,0,0)+\lambda \kappa_u(\bar u-u_\lambda)(x_{-1})
\\ &=\bar u(x_0)-\bar u(x_{-1})+\lambda \kappa_u(\bar u-u_\lambda)(x_{-1}),
\end{align*}
which implies that
\[(1-\lambda\kappa_u)(u_\lambda-\bar u)(x_{-1})\geq u_\lambda(x_0)-\bar u(x_0)>0,\]
which leads to a contradiction as $0< 1-\lambda\kappa_u<1$. Then $u_\lambda(x_{-1})>\bar u(x_{-1})$.

Note that  by (l2), we have
\begin{equation*}\label{u=v01}
\begin{aligned}
\bar u(x_{0})-\bar u(x_{-1})&=\ell(x_{-1},x_{0},0,0)
\\ &\geq \ell\big(x_{-1},x_{0},\lambda \bar u(x_{-1}),\lambda \bar u(x_{0})\big)
\\ &\geq \ell\big(x_{-1},x_{0},\lambda u_\lambda(x_{-1}),\lambda u_\lambda(x_{0})\big)
\\ &\geq u_\lambda(x_{0})-u_\lambda(x_{-1}),
\end{aligned}
\end{equation*}
which implies
$u_\lambda(x_{-1})-\bar u(x_{-1})\geq u_\lambda(x_0)-\bar u(x_{0})$.
By the definition of $x_0$, this must be an equality, that is, $u_\lambda(x_{-1})-\bar u(x_{-1})=u_\lambda(x_0)-\bar u(x_{0})$. Moreover, as all previous inequalities are equalities, we obtain that $\ell(x_{-1},x_{0},0,0)
=\ell\big(x_{-1},x_{0},\lambda \bar u(x_{-1}),\lambda \bar u(x_{0})\big)$.

By induction, the same proof then shows that
\begin{equation*}\label{=max0}
u_\lambda(x_{-k})-\bar u(x_{-k})=\max_{x\in X}\big(u_\lambda(x)-\bar u(x)\big)>0,\quad \forall k\geq 0,
\end{equation*}
and that $\ell(x_{-k-1},x_{-k},0,0)
=\ell\big(x_{-k-1},x_{-k},\lambda \bar u(x_{-k-1}),\lambda \bar u(x_{-k})\big)$ for all $k\geqslant 0$.

Define the probability measure on $X\times X$, for $N>0$,
\begin{equation*}\label{defmu0}
\mu_N:=N^{-1}\sum_{k=-N}^{-1}\delta_{(x_k,x_{k+1})}.
\end{equation*}
By weak compactness of measures on $X\times X$ let $N_n \to +\infty$ be an extraction and $\mu$ a probability measure on $X\times X$ such that $\mu_{N_n}\to \mu$, as $n\to+\infty$.

Let $f\in C^0(X,\R)$. Since
\[\int_{X\times X} \big(f(x)-f(y)\big)d\mu_N=\frac{f(x_0)-f(x_{-N})}{N}\leq \frac{2\|f\|_\infty}{N}\to 0,\]
the measure $\mu$ is closed. We also have
\[\int_{X\times X}\ell(z,x,0,0)d\mu_N=\frac{\bar u(x_0)-\bar u(x_{-N})}{N}\to 0.\]
Thus, $\mu$ is a Mather measure. Since $\mu_{N_n}\to \mu$, for each $(z,x)\in \textrm{supp}(\mu)$, there is a sequence $(z_n,x_n)\in \textrm{supp}(\mu_{N_n})$ with $(z_n,x_n)\to (z,x)$. We have known that $u_\lambda(z_n)-\bar u(z_n)$ equals a constant $M>0$. Therefore, $u_\lambda(z)-\bar u(z)=M>0$. Similarly, we have $u_\lambda(x)-\bar u(x)=M>0$.

By the same argument, from $\ell(z_{n},x_{n},0,0)
=\ell\big(z_{n},x_{n},\lambda \bar u(z_{n}),\lambda \bar u(x_{n})\big)$ we obtain $\ell(z,x,0,0)
=\ell\big(z,x,\lambda \bar u(z),\lambda \bar u(x)\big)$.

Since $u_\lambda>\bar u\geq 0$ on supp$(\mu)$, by (l2), we have
\[\ell(z,x,r,s)=\ell\big(z,x,\lambda u_\lambda(z),\lambda u_\lambda (x)\big),\]
for all $(z,x)\in \textrm{supp}(\mu)$ and $r\in [0,\lambda u_\lambda(z)]$, $s\in[0,\lambda u_\lambda(x)]$. Then
\[\partial_{u,v}\ell(z,x,0,0)=0,\quad \forall (z,x)\in \textrm{supp}(\mu),\]
which contradicts (l4).

\medskip
\noindent {\bf Step 3.} We finally prove the equi-continuity of $u_\lambda$. Let $x,y\in X$. Assume without loss of generality that $u_\lambda(x)\geq u_\lambda(y)$, let $z$ the a minimal point of $u_\lambda(y)$, then by (l1) we have
\begin{align*}
u_\lambda(x)-u_\lambda(y)&\leq \ell\big(z,x,\lambda u_\lambda(z),\lambda u_\lambda(x)\big)-\ell\big(z,y,\lambda u_\lambda(z),\lambda u_\lambda(y)\big)
\\ &\leq \ell\big(z,x,\lambda u_\lambda(z),\lambda u_\lambda(y)\big)-\ell\big(z,y,\lambda u_\lambda(z),\lambda u_\lambda(y)\big)
\\ &\leq \omega \big(d(x,y)\big).
\end{align*}
\end{proof}

\section{Vanishing discount convergence}\label{secconv}

Let $\lambda\to 0$, by Proposition \ref{bdd}, there is a sequence $\lambda_n\to 0$ such that $u_{\lambda_n}$ uniformly converges. Let $u_*$ be a limit function of the family $(u_\lambda)_{\lambda\in(0,\lambda_0)}$. The vanishing discount problem concerns the uniqueness of $u_*$. 

\underline{\bf Notation}: Let us define $\ell_0 : X\times X\to \R$ the function defined by $(z,x)\mapsto \ell(z,x,0,0)$.

Let $\mathcal S_0$ be the set of subsolutions $w$ of $\ell_0$ that satisfy
\begin{equation}\label{cond}
\int_{X\times X}\big(\partial_u \ell(z,x,0,0)w(z)+\partial_v\ell(z,x,0,0)w(x)\big)d\mu(z,x)\geq 0,
\end{equation}
for all Mather measures $\mu$ of $\ell_0$. The set $\mathcal S_0$ is non-empty since negative weak KAM solutions fullfill (\ref{cond}).

\begin{result}\label{m1}
Let $\lambda\to 0$, $u_\lambda$ uniformly converges to
\[u_0:=\sup_{w\in\mathcal S_0}w,\]
where the supremum is taken pointwise. The function $u_0$ is therefore
 a fixed point of $T_0$.
\end{result}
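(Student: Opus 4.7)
The plan is to prove that every uniform-limit point $u_*$ of a subsequence of $(u_\lambda)_{\lambda \in (0,\lambda_0)}$ coincides with $u_0 := \sup_{w \in \mathcal{S}_0} w$. Since the family is precompact by Proposition \ref{bdd}, this will upgrade to uniform convergence of the whole family, and $u_0$ will be a fixed point of $T_0$ by passing to the limit in $u_\lambda = T_\lambda u_\lambda$ (uniform convergence plus continuity of $\ell$). Two inclusions remain: $u_* \leq u_0$ and $u_* \geq u_0$.

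For $u_* \leq u_0$, I will show $u_* \in \mathcal{S}_0$. The limit $u_*$ is clearly a subsolution of $\ell_0$. For any Mather measure $\mu$ of $\ell_0$, integrating the inequality $u_\lambda(x)-u_\lambda(z)\leq \ell(z,x,\lambda u_\lambda(z),\lambda u_\lambda(x))$ against $\mu$, using closedness of $\mu$ and $\int \ell_0\, d\mu=0$, and Taylor-expanding via (l3) with boundedness of $(u_\lambda)$, yields
\[0 \leq \lambda \int \bigl[\partial_u\ell(z,x,0,0)\, u_\lambda(z) + \partial_v\ell(z,x,0,0)\, u_\lambda(x)\bigr]\,d\mu(z,x) + o(\lambda).\]
Dividing by $\lambda$ and passing to the limit yields the defining integral inequality of $\mathcal{S}_0$, hence $u_* \leq u_0$.

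For $u_* \geq u_0$, fix $w \in \mathcal{S}_0$ and aim for $u_* \geq w$. Let $m_\lambda := \min(u_\lambda - w)$ be attained at $x_\lambda$; uniform convergence gives $m_\lambda \to m_* := \min(u_* - w)$. Build a backward orbit $x^{(0)}_\lambda := x_\lambda$ and $x^{(-k-1)}_\lambda$ an argmin in the definition of $T_\lambda u_\lambda(x^{(-k)}_\lambda)$, so the fixed-point equation holds with equality along the orbit. Subtracting the subsolution inequality for $w$ and Taylor-expanding via (l3):
\[(u_\lambda - w)(x^{(-k)}_\lambda) - (u_\lambda - w)(x^{(-k-1)}_\lambda) \geq \lambda\bigl[\partial_u\ell \cdot u_\lambda(x^{(-k-1)}_\lambda) + \partial_v\ell \cdot u_\lambda(x^{(-k)}_\lambda)\bigr] + O(\lambda\eta(\lambda)).\]
Choose $N_\lambda \to \infty$ with $N_\lambda\lambda \to 0$ (e.g.\ $N_\lambda = \lfloor \lambda^{-1/2}\rfloor$) and set $\mu_\lambda := N_\lambda^{-1}\sum_{k=0}^{N_\lambda-1}\delta_{(x^{(-k-1)}_\lambda,\, x^{(-k)}_\lambda)}$. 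Any weak-$*$ limit $\mu$ is closed (marginals differ by $O(N_\lambda^{-1})$) and a Mather measure of $\ell_0$ (using boundedness of $u_\lambda$ and the Taylor expansion).

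Summing the above inequality from $k=0$ to $N_\lambda - 1$ telescopes the left-hand side to $m_\lambda - (u_\lambda - w)(x^{(-N_\lambda)}_\lambda) \leq 0$; dividing by $N_\lambda$ and passing to the limit gives $\int[\partial_u\ell\, u_*(z) + \partial_v\ell\, u_*(x)]\,d\mu \leq 0$, and combined with $u_* \in \mathcal{S}_0$ this is in fact an equality. The main obstacle is to prove $u_* - w \equiv m_*$ on $\mathrm{supp}(\mu)$: a partial sum of the same inequality up to any index $k \leq N_\lambda$ bounds $(u_\lambda - w)(x^{(-k)}_\lambda) \leq m_\lambda + O(k\lambda)$, so the choice $N_\lambda\lambda \to 0$ forces $u_\lambda - w = m_\lambda + o(1)$ uniformly along the orbit; in the limit $u_* - w = m_*$ on $\mathrm{supp}(\mu)$. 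Substituting $u_* = w + m_*$ there into the previous equality:
\[0 = \int \bigl[\partial_u\ell(z,x,0,0)\, w(z) + \partial_v\ell(z,x,0,0)\, w(x)\bigr]\,d\mu + m_* \int \bigl[\partial_u\ell(z,x,0,0) + \partial_v\ell(z,x,0,0)\bigr]\,d\mu.\]
The first integral is $\geq 0$ since $w \in \mathcal{S}_0$, while the coefficient of $m_*$ is strictly negative by (l4), forcing $m_* \geq 0$, i.e., $u_* \geq w$. The delicate balance in choosing $N_\lambda$ (large enough so $\mu$ is closed and Mather, yet small enough to freeze $u_\lambda - w$ along the orbit) is the technical crux.
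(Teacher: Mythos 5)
Your proposal is correct, and for the hard inequality $u_*\geq u_0$ it takes a genuinely different route from the paper. The paper works with the \emph{entire} infinite backward orbit issued from an arbitrary point $x_0$, weights it by the products $\beta_n=\prod\frac{1+\lambda\partial_u\ell}{1-\lambda\partial_v\ell}$ (an Abel-type discounted occupation measure), proves the a priori bound $\lambda\sum_k\beta_k\leq K$ by a contradiction argument, shows the two normalized measures $\mu^1_\lambda,\mu^2_\lambda$ have a common Mather-measure limit (Lemma \ref{samelimit}, Proposition \ref{mathermeasure}), and deduces the pointwise inequality $u_*(x_0)\geq w(x_0)$ from the representation formula of Proposition \ref{u*>}. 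You instead start the backward orbit at the minimizer of $u_\lambda-w$, use an \emph{unweighted} Ces\`aro average over a two-scale horizon $N_\lambda$ with $N_\lambda\to\infty$ and $\lambda N_\lambda\to 0$, exploit that the partial telescoping forces $u_\lambda-w=m_\lambda+O(\lambda N_\lambda)$ along the truncated orbit so that $u_*-w\equiv m_*$ on $\supp(\mu)$, and conclude $m_*\geq 0$ from (l4) and $w\in\mathcal S_0$. This is essentially the technique the paper itself uses in Step 2 of Proposition \ref{bdd} (there with the calibrated orbit of $\bar u$ and an exact constancy of the difference), upgraded by the $\lambda^{-1/2}$ horizon to handle the $O(\lambda)$ drift of the $T_\lambda$-orbit; it is arguably more elementary, as it dispenses with the weighted sums, the bound $\lambda C_\lambda\leq K$, and the comparison of the two limit measures, at the price of yielding only the global inequality $\min(u_*-w)\geq 0$ rather than a pointwise representation at each $x_0$ (which is all that is needed here, though the paper's weighted machinery is closer in spirit to the continuous-time arguments of \cite{DFIZ2,CFZZ}). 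The remaining ingredients (well-posedness of $u_0$, the inclusion $u_*\in\mathcal S_0$ via Proposition \ref{u*<}, and the passage to the limit in the fixed-point equation) match the paper's proof; note only that, like the paper, you implicitly use continuity of the subsolutions $w$ when passing to the limit on $\supp(\mu)$.
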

 We also establish an alternative formula for the limit $u_0$:

 \begin{result}\label{alternative}
 The following holds for all $x\in X$:
 \[u_0(x)=\min_{\mu\in\mathfrak M_0}\frac{\int_{X\times X}\Big(\partial_u\ell(z,y,0,0)h(z,x)+\partial_v\ell(z,y,0,0)h(y,x)\Big)d\mu(z,y)}
{ \int_{X\times X}\Lambda(z,y)d\mu(z,y)},\]
where $\mathfrak M_0$ denotes the set of Mather measures of $\ell_0$,
\[\Lambda(z,y):=\partial_u \ell(z,y,0,0)+\partial_v\ell(z,y,0,0),\]
and $h(z,x)$ is  Peierls' barrier of $\ell_0$.

 \end{result}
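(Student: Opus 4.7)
The plan is to establish the two inequalities of the formula separately. Denote by $f(\mu,x)$ the quotient on the right-hand side.

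For the direction $u_0(x)\leq \min_\mu f(\mu,x)$, fix $w\in\mathcal S_0$ and $\mu\in\mathfrak M_0$. Proposition \ref{proph}(3), applied to the subsolution $w$, gives $w(z)\geq w(x)-h(z,x)$ and $w(y)\geq w(x)-h(y,x)$ for all $z,y\in X$. Since $\partial_u\ell(z,y,0,0),\partial_v\ell(z,y,0,0)\leq 0$ by (l2), multiplying these lower bounds by the derivatives reverses the inequalities, so integrating against $\mu$ and combining with the defining inequality (\ref{cond}) yields
\[0\leq w(x)\int_{X\times X}\Lambda\,d\mu-\int_{X\times X}\bigl(\partial_u\ell(z,y,0,0)h(z,x)+\partial_v\ell(z,y,0,0)h(y,x)\bigr)d\mu.\]
Dividing by $\int\Lambda\,d\mu<0$ (from (l4)) flips the sign and gives $w(x)\leq f(\mu,x)$. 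Taking the supremum over $w\in\mathcal S_0$ and then the minimum over $\mu$ (attained thanks to weak-$\ast$ compactness of $\mathfrak M_0$ and continuity of $\mu\mapsto f(\mu,x)$) yields the inequality.

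For the reverse inequality, the natural candidate is $w^\ast(x):=\min_{\mu\in\mathfrak M_0}f(\mu,x)$, and the goal is to show $w^\ast\in\mathcal S_0$; this would immediately give $u_0\geq w^\ast$ since $u_0=\sup\mathcal S_0$ (Theorem \ref{m1}). The subsolution property is the easier half: given $x_1,x_2\in X$ and $\mu^\ast$ realizing $w^\ast(x_1)=f(\mu^\ast,x_1)$, one bounds $w^\ast(x_2)-w^\ast(x_1)\leq f(\mu^\ast,x_2)-f(\mu^\ast,x_1)$ and then uses the triangle-like inequality $h(z,x_2)-h(z,x_1)\leq \ell_0(x_1,x_2)$, which follows from $h_{n+1}(z,x_2)\leq h_n(z,x_1)+\ell_0(x_1,x_2)$ (concatenate the optimal chain for $h_n(z,x_1)$ with the edge $(x_1,x_2)$) after passing to the $\liminf$ and using $c_0=0$. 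Combined with $\partial_u\ell,\partial_v\ell\leq 0$ and division by the negative $\int\Lambda\,d\mu^\ast$, this forces $w^\ast(x_2)-w^\ast(x_1)\leq \ell_0(x_1,x_2)$.

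The main obstacle is verifying condition (\ref{cond}) for $w^\ast$. The difficulty is that $w^\ast$ is defined as a pointwise minimum, with the optimal measure $\mu^\ast_z$ potentially depending on $z$, and this does not interact cleanly with the integrand $\partial_u\ell(z,y,0,0)w^\ast(z)+\partial_v\ell(z,y,0,0)w^\ast(y)$ against a single Mather measure. My first strategy would be a first-order optimality analysis: differentiating $t\mapsto f\bigl((1-t)\mu^\ast_x+t\mu',x\bigr)$ at $t=0$ for a competitor $\mu'\in\mathfrak M_0$ gives the variational identity
\[\int_{X\times X}\bigl(\partial_u\ell(z,y,0,0)(h(z,x)-w^\ast(x))+\partial_v\ell(z,y,0,0)(h(y,x)-w^\ast(x))\bigr)d\mu'(z,y)\leq 0,\]
which I would try to convert into (\ref{cond}) using closedness of $\mu'$ and the Aubry-set identity $h(z,z)=0$ on $\mathfrak A\supseteq\supp(\mu')$. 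Should this direct route stall, the fallback is Fenchel--Rockafellar / LP duality applied to the primal $\sup\{w(x):w\in\mathcal S_0\}$: the feasible set is convex, cut out by the subsolution cone and the linear inequalities (\ref{cond}), and the dual extremum is attained over positive measures on $X\times X$ balanced by nonnegative combinations of Mather measures. Strong duality, available thanks to the compactness of $X$ and continuity of all data, would then identify the dual optimum with precisely $\min_\mu f(\mu,x)$.
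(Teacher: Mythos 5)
Your first inequality, $u_0(x)\leq \min_\mu f(\mu,x)$, is correct and is essentially the paper's argument (the paper applies it directly to $w=u_0$, using that $u_0\in\mathcal S_0$; you apply it to every $w\in\mathcal S_0$ and then take the supremum, which amounts to the same thing). Your verification that $\hat u_0:=\min_\mu f(\mu,\cdot)$ is a subsolution is also fine, though the paper gets it more quickly by viewing each $f(\mu,\cdot)$ as a barycenter of the subsolutions $h(z,\cdot)$ with the nonnegative weights $\partial_u\ell\,d\mu/\int\Lambda\,d\mu$ and $\partial_v\ell\,d\mu/\int\Lambda\,d\mu$, and invoking Propositions \ref{conv} and \ref{proph}.

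The reverse inequality, however, is not proved: you correctly identify that the whole difficulty is verifying condition \eqref{cond} for $w^\ast=\hat u_0$, and then you only list two candidate strategies (a first-order optimality analysis, and an LP-duality fallback) without carrying either out. Neither sketch is convincing as stated: $f(\mu,x)$ is a ratio of two linear functionals of $\mu$, so the ``variational identity'' you write down from differentiating along $(1-t)\mu^\ast_x+t\mu'$ is an inequality about the \emph{fixed} base point $x$, whereas \eqref{cond} requires controlling $\int\bigl(\partial_u\ell(z,y,0,0)w^\ast(z)+\partial_v\ell(z,y,0,0)w^\ast(y)\bigr)d\mu'$ where the minimizing measure in the definition of $w^\ast(z)$ varies with $z$; nothing you write bridges that. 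The paper avoids this obstacle entirely: for each fixed $y$ it tests with $v_y(\cdot)=-h(\cdot,y)+\hat u_0(y)$, for which \eqref{cond} is immediate because $\hat u_0(y)$ enters only as a constant and the required inequality $\hat u_0(y)\int\Lambda\,d\mu\geq\int\bigl(\partial_u\ell\,h(z,y)+\partial_v\ell\,h(x,y)\bigr)d\mu$ is exactly the statement that $\hat u_0(y)$ is the minimum over $\mathfrak M_0$ of the defining quotient (the denominator being negative). This gives $v_y\leq u_0$, hence $u_0(y)\geq\hat u_0(y)-h(y,y)$, i.e.\ $u_0\geq\hat u_0$ on the projected Aubry set $\mathfrak A$, and the comparison principle (Proposition \ref{comparison}) between the solution $u_0$ and the subsolution $\hat u_0$ then propagates the inequality to all of $X$. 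You should replace your unproven claim ``$w^\ast\in\mathcal S_0$'' by this test-function argument, or supply a complete proof of one of your two strategies.
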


\begin{remark}\label{barr} \rm
\begin{enumerate}
\item When $\ell$ satisfies that $\partial_v \ell (\cdot,\cdot,0,0)$ is constant, the previous equality reduces to
\[u_0(x)=\min_{\mu\in\mathfrak M_0}\bigg(\int_{X\times X}\Lambda(z,y)d\mu(z,y)\bigg)^{-1}\int_{X\times X}\Lambda(z,y)h(z,x)d\mu(z,y).\]

\item Symmetrically, when  $\ell$ satisfies that $\partial_u \ell (\cdot,\cdot,0,0)$ is constant, then\[u_0(x)=\min_{\mu\in\mathfrak M_0}\bigg(\int_{X\times X}\Lambda(y,z)d\mu(y,z)\bigg)^{-1}\int_{X\times X}\Lambda(z,y)h(y,x)d\mu(z,y).\]

\end{enumerate}
\end{remark}

\begin{proposition}\label{u*<}
For each Mather measure $\mu$ of $\ell_0$, we have
\[\int_X \big(\partial_u \ell(z,x,0,0)u_*(z)+\partial_v \ell(z,x,0,0)u_*(x)\big)d\mu(z,x)\geq 0.\]
\end{proposition}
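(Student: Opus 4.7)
The plan is to start from the variational inequality coming from the fixed point equation for $u_\lambda$, integrate against the Mather measure, exploit closedness to cancel linear terms, and then perform a Taylor expansion via (l3) before dividing by $\lambda$ and passing to the limit.

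More precisely, the fixed point relation $u_\lambda = T_\lambda u_\lambda$ means that for all $z,x\in X$,
\[u_\lambda(x)-u_\lambda(z)\leq \ell\bigl(z,x,\lambda u_\lambda(z),\lambda u_\lambda(x)\bigr).\]
Let $\mu$ be a Mather measure for $\ell_0$. Integrating the above inequality against $\mu$ and using closedness ($\pi_{1\ast}\mu=\pi_{2\ast}\mu$) to kill the left-hand side, I obtain
\[\int_{X\times X}\ell\bigl(z,x,\lambda u_\lambda(z),\lambda u_\lambda(x)\bigr)\,d\mu(z,x)\geq 0.\]

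Next I apply (l3) to expand the integrand around $(0,0)$. Since the family $(u_\lambda)$ is uniformly bounded by Proposition \ref{bdd}, there is a constant $C$ such that $\lambda|u_\lambda|\leq \lambda C$ uniformly, and the error term in (l3) is bounded by $\lambda C\,\eta(\lambda C)$, hence is $o(\lambda)$ uniformly in $(z,x)$. Using also that $\mu$ is a Mather measure for $\ell_0$, so $\int \ell_0\,d\mu = 0$ (recall $c_0 = 0$), the inequality becomes
\[\lambda\int_{X\times X}\!\Bigl(\partial_u \ell(z,x,0,0)\,u_\lambda(z)+\partial_v \ell(z,x,0,0)\,u_\lambda(x)\Bigr)d\mu(z,x) + o(\lambda)\geq 0.\]

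Dividing by $\lambda>0$ and specializing to the subsequence $\lambda=\lambda_n\to 0$ along which $u_{\lambda_n}\to u_*$ uniformly, the continuity of $(z,x)\mapsto \partial_u\ell(z,x,0,0)$ and $(z,x)\mapsto \partial_v\ell(z,x,0,0)$ (noted just after (l4)) together with uniform convergence allow passage to the limit under the integral sign, yielding the desired inequality. No real obstacle is expected here; the only delicate point is to justify that the remainder from (l3) is $o(\lambda)$ \emph{uniformly}, but this is immediate from the uniform bound on $(u_\lambda)$ and the fact that $\eta$ is a modulus of continuity.
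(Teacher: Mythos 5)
Your argument is correct and is essentially the paper's own proof: integrate the fixed-point inequality against the Mather measure, use closedness to annihilate $\int\bigl(u_\lambda(x)-u_\lambda(z)\bigr)d\mu$, expand via (l3) with the uniform bound on $(u_\lambda)$ giving an error $\lambda\varepsilon(\lambda)=o(\lambda)$, then divide by $\lambda$ and pass to the limit along $\lambda_n$. No issues.
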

\begin{proof}
Let $\mu$ be a Mather measure,
Since $T_\lambda u_\lambda=u_\lambda$, recalling that $\int \ell_0 d\mu = 0$, we have
\begin{align*}
&\int_{X\times X}\big(u_\lambda(x)-u_\lambda(z)\big)d\mu(z,x)
\\ &\leq \int_{X\times X}\ell\big(z,x,\lambda u_\lambda(z),\lambda u_\lambda(x)\big)d\mu(z,x)
\\ &\leq \int_{X\times X}\big(\ell(z,x,0,0)+\lambda\partial_u \ell(z,x,0,0)u_\lambda(z)+\lambda\partial_v \ell(z,x,0,0)u_\lambda(x)\big)d\mu(z,x)+\lambda \varepsilon(\lambda)
\\ &=\int_{X\times X}\big(\lambda\partial_u \ell(z,x,0,0)u_\lambda(z)+\lambda\partial_v \ell(z,x,0,0)u_\lambda(x)\big)d\mu(z,x)+\lambda \varepsilon(\lambda),
\end{align*}
where $\varepsilon(\lambda)=\|u_\lambda\|_\infty\eta(\lambda\|u_\lambda\|_\infty)$. Since $\mu$ is closed, we have
\[\int_{X\times X}\big(u_\lambda(x)-u_\lambda(z)\big)d\mu(z,x)=0.\]
Therefore, we have
\[\int_{X\times X} \big(\partial_u \ell(z,x,0,0)u_\lambda(z)+\partial_v \ell(z,x,0,0)u_\lambda(x)\big) d\mu(z,x)\geq -\varepsilon(\lambda).\]
Letting $\lambda\to 0$ along the sequence $(\lambda_n)_n$, we then get the result.
\end{proof}

\begin{remark}\label{xn}\rm
As observed in the proof of Proposition \ref{bdd}, since $u_\lambda$ is a fixed point of $T_\lambda$, for each $x\in X$, there is a sequence $(x_n)_{-n\in\mathbb N}$ with $x_0=x$, such that
\[\forall n\leqslant 0, \quad u_\lambda(x_n)=u_\lambda(x_{n-1})+\ell\big(x_{n-1},x_n,\lambda u_\lambda(x_{n-1}),\lambda u_\lambda(x_n)\big).\]
Here we note that the sequence $(x_n)_{-n\in \mathbb N}$ depends on $x$ and $\lambda$.
\end{remark}

\begin{lemma}\label{beta}
For $-n\in\mathbb N_+$, we define
\[\beta_n=\frac{\prod_{i=n}^{-1}\big(1+\lambda \partial_u \ell(x_{i},x_{i+1},0,0)\big)}{\prod_{i=n}^0\big(1-\lambda \partial_v \ell(x_{i-1},x_i,0,0)\big)},\quad \beta_0=\frac{1}{1-\lambda \partial_v \ell(x_{-1},x,0,0)}.\]
Since $(x_n)_{-n\in \mathbb N}$ depends on $x$ and $\lambda$, the sequence $(\beta_n)_{-n\in\mathbb N}$ also depends on $x$ and $\lambda$. For each integer $N>0$, we have
\begin{equation}\label{ul=}
\begin{aligned}
u_\lambda(x)&=\sum_{n=-N+1}^{0}\beta_n\bigg(\ell(x_{n-1},x_n,0,0)+\theta_k(\lambda)\bigg)
\\ &\quad +\big(1+\lambda\partial_u \ell(x_{-N},x_{-N+1},0,0)\big)\beta_{-N+1}u_\lambda(x_{-N}),
\end{aligned}
\end{equation}
where $|\theta_k(\lambda)|\leq \lambda\varepsilon(\lambda)$.
\end{lemma}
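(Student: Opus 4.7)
The plan is to linearize the defining recursion of $u_\lambda$ along the orbit $(x_n)_{-n\in \mathbb N}$ from Remark \ref{xn} and then solve the resulting inhomogeneous linear scalar recursion by induction on $N$.

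To set notation, abbreviate $a_n := \partial_u\ell(x_{n-1},x_n,0,0)$ and $b_n := \partial_v\ell(x_{n-1},x_n,0,0)$. By hypothesis (l3) applied at the pair $\bigl(\lambda u_\lambda(x_{n-1}),\lambda u_\lambda(x_n)\bigr)$, together with the uniform bound on $\|u_\lambda\|_\infty$ from Proposition \ref{bdd}, one obtains
\[\ell\bigl(x_{n-1},x_n,\lambda u_\lambda(x_{n-1}),\lambda u_\lambda(x_n)\bigr) = \ell_0(x_{n-1},x_n) + \lambda a_n u_\lambda(x_{n-1}) + \lambda b_n u_\lambda(x_n) + \theta_n(\lambda),\]
with $|\theta_n(\lambda)|\leqslant \lambda\,\varepsilon(\lambda)$ where $\varepsilon(\lambda)=\|u_\lambda\|_\infty\eta(\lambda\|u_\lambda\|_\infty)$ as in the previous proposition. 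Substituting this into the identity $u_\lambda(x_n)=u_\lambda(x_{n-1})+\ell\bigl(x_{n-1},x_n,\lambda u_\lambda(x_{n-1}),\lambda u_\lambda(x_n)\bigr)$ from Remark \ref{xn} and collecting the $u_\lambda(x_n)$ terms on the left yields the linear recursion
\[(1-\lambda b_n)\,u_\lambda(x_n) = (1+\lambda a_n)\,u_\lambda(x_{n-1}) + \ell_0(x_{n-1},x_n) + \theta_n(\lambda),\qquad n\leqslant 0.\]

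The proof of \eqref{ul=} will then proceed by induction on $N\geqslant 1$. For $N=1$, one divides the recursion at $n=0$ by $1-\lambda b_0=1/\beta_0$ and reads off exactly the asserted formula. For the inductive step, assuming \eqref{ul=} holds at step $N$, I substitute into it the expression for $u_\lambda(x_{-N})$ coming from the recursion at $n=-N$. The coefficient of $u_\lambda(x_{-N-1})$ then becomes $(1+\lambda a_{-N+1})\beta_{-N+1}\cdot\dfrac{1+\lambda a_{-N}}{1-\lambda b_{-N}}$, which a direct comparison of products in the definition of $\beta_n$ shows equals $(1+\lambda a_{-N})\beta_{-N}$. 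Similarly, the new inhomogeneous term appears multiplied by $(1+\lambda a_{-N+1})\beta_{-N+1}\cdot\dfrac{1}{1-\lambda b_{-N}}=\beta_{-N}$. Both combinatorial identities follow immediately from the fact that the numerator of $\beta_n$ runs over $i=n,\ldots,-1$ whereas the denominator runs over $i=n,\ldots,0$, so that incrementing $N$ extends both products by one additional factor.

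The argument is essentially routine once the linearization is in place; the only potential obstacle is the index bookkeeping, since $\beta_n$ is defined by asymmetrically shifted products and one must be careful that the error terms produced by (l3) remain uniformly controlled by $\lambda\varepsilon(\lambda)$ independently of $n$ (which they do, thanks to the uniform bound on $\|u_\lambda\|_\infty$). No deeper ingredient is required.
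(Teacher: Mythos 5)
Your proof is correct and follows essentially the same route as the paper: both linearize the fixed-point relation along the orbit via (l3) to get the scalar recursion $(1-\lambda b_n)u_\lambda(x_n)=(1+\lambda a_n)u_\lambda(x_{n-1})+\ell_0(x_{n-1},x_n)+\theta_n(\lambda)$ and then unroll it, the coefficients telescoping into the $\beta_n$ exactly as you verify. The only difference is presentational — you organize the unrolling as a formal induction on $N$, whereas the paper iterates the substitution and sums the resulting equalities.
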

\begin{proof}
Since $u_\lambda$ is a fixed point of $T_\lambda$, we have
\begin{align*}
u_\lambda(x)&=u_\lambda(x_{-1})+\ell\big(x_{-1},x,\lambda u_\lambda(x_{-1}),\lambda u_\lambda(x)\big)
\\ &=u_\lambda(x_{-1})+\ell(x_{-1},x,0,0)+\partial_u \ell(x_{-1},x,0,0)\lambda u_\lambda(x_{-1})+\partial_v \ell(x_{-1},x,0,0)\lambda u_\lambda(x)+\theta_0(\lambda),
\end{align*}
where $|\theta_0(\lambda)|\leq \lambda \varepsilon(\lambda)$, which implies
\begin{align*}
u_\lambda(x)&=\frac{1+\lambda\partial_u \ell(x_{-1},x,0,0)}{1-\lambda \partial_v \ell(x_{-1},x,0,0)}u_\lambda(x_{-1})+\frac{1}{1-\lambda \partial_v \ell(x_{-1},x,0,0)}\big(\ell(x_{-1},x,0)+\theta_0(\lambda)\big)
\\ &=\big(1+\lambda\partial_u \ell(x_{-1},x,0,0)\big)\beta_0u_\lambda(x_{-1})+\beta_0\big(\ell(x_{-1},x,0)+\theta_0(\lambda)\big).
\end{align*}
We also have
\[u_\lambda(x_{-1})=u_\lambda(x_{-2})+\ell\big(x_{-2},x_{-1},\lambda u_\lambda(x_{-2}),\lambda u_\lambda(x_{-1})\big),\]
which implies
\begin{align*}
&\big(1+\lambda\partial_u \ell(x_{-1},x,0,0)\big)\beta_0u_\lambda(x_{-1})
\\ &=\big(1+\lambda\partial_u \ell(x_{-2},x_{-1},0,0)\big)\beta_{-1}u_\lambda(x_{-2})+\beta_{-1}\big(\ell(x_{-2},x_{-1},0,0)+\theta_{-1}(\lambda)\big),
\end{align*}
where $|\theta_{-1}(\lambda)|\leq \lambda\varepsilon(\lambda)$. Letting this procedure go on, and adding all equalities up, we get (\ref{ul=}).
\end{proof}

\begin{proposition}
There is $r>0$ such that for each $\lambda\in (0,r)$, there is $K>0$, independent of $\lambda$ and $x$, such that $\lambda\sum_{k\leq 0}\beta_k\leq K$.
\end{proposition}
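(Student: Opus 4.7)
The plan is to show $\beta_n$ decays geometrically along the orbit at rate $\sim e^{-c\lambda |n|}$, uniformly in $\lambda$ and $x$, so that $\lambda\sum_n\beta_n$ is comparable to a geometric series that sums to $O(1)$. Writing $a_i:=\partial_u\ell(x_i,x_{i+1},0,0)\leq 0$ and $b_i:=\partial_v\ell(x_{i-1},x_i,0,0)\leq 0$ (by (l2)), the elementary inequalities $\ln(1+x)\leq x$ for $x\geq -1$ and $-\ln(1-y)\leq y+y^2/2$ for $y\leq 0$ give, for $\lambda$ small,
\[
\ln\beta_n \leq \lambda T_n + \tfrac{\lambda^2\kappa_v^2}{2}(|n|+1),\qquad T_n:=\sum_{i=n}^{-1}a_i+\sum_{i=n}^{0}b_i.
\]
The task reduces to showing $T_n\leq -\delta|n|+C$ for universal $\delta,C>0$.

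The heart of the matter is the following uniform non-degeneracy estimate: there exist $\delta>0$, $r\in(0,\lambda_0)$, and $N_0\in\mathbb N$ such that for every $\lambda\in(0,r)$, every fixed point $u_\lambda$, and every orbit $(x_k)_{k\leq 0}$ from Remark \ref{xn},
\[
\int \Lambda\, d\mu_{N_0} \leq -\delta, \qquad \mu_{N_0}:=\frac{1}{N_0}\sum_{i=-N_0}^{-1}\delta_{(x_i,x_{i+1})}.
\]
I would prove this by contradiction. If it fails, extract $\lambda_k\to 0$, $x^{(k)}\in X$, $N_k\to\infty$ with $\int\Lambda\,d\mu_{N_k}>-1/k$, and let $\mu$ be a weak-$*$ accumulation point of the resulting empirical measures. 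Exactly as in the proof of Proposition \ref{bdd}, the telescoping identity $\int(f(x)-f(z))\,d\mu_{N_k}=N_k^{-1}(f(x_0)-f(x_{-N_k}))\to 0$ forces $\mu$ to be closed. Using the fixed point relation, hypothesis (l1), and the uniform bound on $\|u_\lambda\|_\infty$ from Proposition \ref{bdd},
\[
\int \ell_0\,d\mu_{N_k} \;=\; \frac{u_{\lambda_k}(x_0)-u_{\lambda_k}(x_{-N_k})}{N_k}+O(\lambda_k) \;\longrightarrow\; 0,
\]
so $\mu\in\mathfrak M_0$. By (l4), weak-$*$ compactness of $\mathfrak M_0$, and continuity of $\Lambda$, $\max_{\nu\in\mathfrak M_0}\int\Lambda\,d\nu<0$, contradicting $\int\Lambda\,d\mu\geq 0$.

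I then extend the estimate to arbitrary $|n|$ by iteration: for each $j\geq 1$, the shifted tail $(x_{-jN_0+k})_{k\leq 0}$ is itself a valid orbit for $x_{-jN_0}$ in the sense of Remark \ref{xn}, so the uniform block bound applies on each consecutive block of length $N_0$. Summing the block estimates (and using $a_i,b_i\leq 0$ to discard the residual partial block) gives $T_n\leq -\delta|n|+C$ with $C$ depending only on $\delta, N_0$. Plugging into the logarithmic estimate and choosing $\lambda$ small enough that $\lambda\kappa_v^2/2<\delta/2$ yields $\beta_n\leq e^{\lambda C'}e^{-\lambda|n|\delta/2}$, whence
\[
\lambda\sum_{n\leq 0}\beta_n \;\leq\; \frac{\lambda\, e^{\lambda C'}}{1-e^{-\lambda\delta/2}} \;\leq\; \frac{4\, e^{C'}}{\delta} \;=:\; K,
\]
using $1-e^{-x}\geq x/2$ on $(0,1]$.

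\textbf{Main obstacle.} The uniform non-degeneracy claim is the crux. Since the orbit $(x_k)$ depends on both $\lambda$ and $x$, one must simultaneously pass $\lambda\to 0$ and $N\to\infty$ while verifying that any weak-$*$ limit of the empirical measures is not merely closed but a bona fide Mather measure of $\ell_0$; only then does (l4), combined with a compactness argument on $\mathfrak M_0$, trigger and yield a strictly negative $\int \Lambda\,d\mu$ uniform in the approximating data.
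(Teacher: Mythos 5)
Your argument is correct, but it follows a genuinely different route from the paper's. The paper argues by contradiction on the quantity $\lambda\sum_k\beta_k$ itself: assuming $\lambda_n C_n\to+\infty$, it forms the \emph{weighted} occupation measures $\mu_n=C_n^{-1}\sum\beta^n_{k+1}\delta_{(x^n_k,x^n_{k+1})}$, shows they are closed (Abel summation, using $\beta_k\leq\beta_{k+1}$) and minimizing (via the representation \eqref{ul=}), and then kills $\int\Lambda\,d\mu_n$ with the elementary bound $\frac{1+x}{1-y}\leq e^{x+y}$ for $y\leq 0$, which lets it dominate $-\sum\beta^n_{k+1}\Lambda(x^n_k,x^n_{k+1})$ by a Riemann sum of $\int_0^{+\infty}e^{-\lambda_n t}\,dt=1/\lambda_n$, so that $-\int\Lambda\,d\mu_n\leq e^{\|\Lambda\|_\infty}/(\lambda_nC_n)\to0$, contradicting (l4). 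You instead prove a uniform negativity of the \emph{unweighted} Birkhoff averages of $\Lambda$ over blocks of fixed length $N_0$ along calibrated orbits (your compactness-plus-(l4) argument for this is sound: the telescoping and the $O(\lambda)$ perturbation controlled by Proposition \ref{bdd} do force any weak-$*$ limit to be a Mather measure, and $\Lambda$ is continuous by (l3)), then convert this into pointwise geometric decay $\beta_n\leq e^{\lambda C'}e^{-\lambda\delta|n|/2}$ and sum directly. Your version is longer but strictly more informative: it produces an explicit exponential decay rate for the weights $\beta_n$ and an explicit constant $K$, whereas the paper's argument only controls the sum. The one point you should make fully explicit is that your uniform block estimate must be quantified over \emph{all} calibrated orbits in the sense of Remark \ref{xn} (not only the one attached to the fixed $x_0$), since you apply it to the shifted tails $(x_{-jN_0+k})_{k\leq 0}$; your contradiction argument does quantify this way, so the step is legitimate, and the bookkeeping $T_n=\sum_{i=n}^{-1}\Lambda(x_i,x_{i+1})+b_n\leq\sum_{i=n}^{-1}\Lambda(x_i,x_{i+1})$ with $b_n\leq 0$ closes the gap between your $T_n$ and the block sums.
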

\begin{proof}
We argue by contradiction. Assume there is a sequence $(\lambda_n)_{n\in \mathbb N}\to 0$ and $(x_n)_{n\in \mathbb N}\in X^{\mathbb N}$, and for all $n$, a minimal sequence $(x^n_k)_{-k\in\mathbb N}$ associated to $u_{\lambda_n}(x_n)$ and an integer $N_n>0$ such that \[\lambda_n\sum_{k=-N_n}^{-1}\beta^n_{k+1}\to +\infty.\]
Here $(\beta^n_k)_{-k\in\mathbb N}$ is the sequence associated to $(x^n_k)_{-k\in\mathbb N}$ as defined in Lemma \ref{beta}, which depends on $x_n$ and $\lambda_n$. As for $k$ fixed,  $\beta^n_k \to  1$ as $n\to +\infty$, we have $N_n\to +\infty$.

Define the probability measure
\[\mu_n:=C_n^{-1}\sum_{k=-N_n}^{-1}\beta_{k+1}^n\delta_{(x^n_{k},x^n_{k+1})},\]
where $C_n=\sum_{k=-N_n}^{-1}\beta^n_{k+1}$. Up to an extraction, we assume $\mu_n\to \mu$.

\noindent {\bf $\mu$ is closed:} Let $f\in C^0(X,\R)$, we have
\begin{align*}
&\bigg|\int_{X\times X}\big(f(x)-f(z)\big)d\mu_n(z,x)\bigg|
\\ &=C_n^{-1}\bigg|\sum_{k=-N_n}^{-1}\beta_{k+1}^n\big(f(x^n_{k+1})-f(x^n_k)\big)\bigg|
\\ &=C_n^{-1}\bigg|\sum_{k=-N_n}^{-1}(\beta_k^n-\beta_{k+1}^n)f(x^n_{k})-\beta^n_{-N_n}f(x^n_{-N_n})+\beta^n_0 f(x_n)\bigg|
\\ &\leq C_n^{-1}\bigg(\sum_{k=-N_n}^{-1}(\beta_{k+1}^n-\beta_{k}^n)\|f\|_\infty+2\|f\|_\infty\bigg)\leq 4C^{-1}_n\|f\|_\infty,
\end{align*}
where we use the fact $\beta^n_{k}\leq \beta^n_{k+1}$. Since $C_n\to +\infty$, $\mu$ is closed.

\noindent {\bf $\mu$ is minimizing:} By definition we have
\begin{align*}
&\bigg|\int_{X\times X}\ell(z,x,0,0)d\mu_n(z,x)\bigg|
\\ &=C_n^{-1}\bigg|\sum_{k=-N_n}^{-1}\beta_{k+1}^n\ell(x^n_k,x^n_{k+1},0,0)\bigg|
\\ &=C_n^{-1}\bigg|u_{\lambda_n}(x_n)-\sum_{k=-N_n}^{-1}\beta^n_{k+1}\theta^n_{k+1}(\lambda_n)-\big(1+\lambda\partial_u \ell(x^n_{-N},x^n_{-N+1},0,0)\big)\beta^n_{-N+1}u_\lambda(x^n_{-N})\bigg|
\\ &\leq 2C^{-1}_n\|u_{\lambda_n}\|_\infty+\lambda_n\omega(\lambda_n)\to 0.
\end{align*}

Now using
\[\frac{1+x}{1-y}=1+\frac{x+y}{1-y}\leq \exp\bigg\{\frac{x+y}{1-y}\bigg\}\leq \exp\{x+y\},\quad \textrm{for}\ \  y\leq 0,\]
we get
\begin{align*}
-\int_{X\times X}\Lambda(z,x)d\mu_n(z,x)&=-C_n^{-1}\sum_{k=-N_n}^{-1}\beta^n_{k+1}\Lambda(x^n_k,x^n_{k+1})
\\ &\leq -C_n^{-1}\sum_{k=-N_n}^{-1}\exp\{\lambda_n\sum_{i=k+1}^{-1}\Lambda(x^n_{i},x^n_{i+1})\}\Lambda(x^n_k,x^n_{k+1})
\\ &\leq -C_n^{-1}\exp\{\|\Lambda\|_\infty\}\sum_{k=-N_n}^{-1}\exp\{\lambda_n\sum_{i=k}^{-1}\Lambda(x^n_{i},x^n_{i+1})\}\Lambda(x^n_k,x^n_{k+1})
\\ &\leq C_n^{-1}\exp\{\|\Lambda\|_\infty\}\int_0^{+\infty}e^{-\lambda_n x}dx=\frac{\exp\{\|\Lambda\|_\infty\}}{\lambda_n C_n}\to 0,
\end{align*}
which contradicts (l4).
\end{proof}

In the following, let $(x_n)_{-n\in \mathbb N}$ and $(\beta_n)_{-n\in\mathbb N}$ be the sequences defined in Remark \ref{xn} and Lemma \ref{beta} respectively, associated to the pair $x_0\in X$ and $\lambda \in (0,r)$.
\begin{proposition}
For each $\lambda\in (0,r)$ and $x_0\in X$, we have
\begin{equation}\label{u==}
  u_{\lambda}(x_0)=\sum_{n\leq 0}\beta_n \ell(x_{n-1},x_n,0,0)+\Omega(\lambda),
\end{equation}
where
\[\lim_{\lambda\to 0}\Omega(\lambda)\to 0.\]
\end{proposition}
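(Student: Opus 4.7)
The plan is to pass to the limit $N\to +\infty$ in the finite identity (\ref{ul=}) provided by Lemma \ref{beta}, using the uniform bound $\lambda\sum_{n\leq 0}\beta_n\leq K$ from the previous proposition as the key tool.

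First, I would fix $\lambda\in(0,r)$ and $x_0\in X$ and observe that since $\ell_0$ is continuous on the compact $X\times X$, it is bounded, and since $\sum_{n\leq 0}\beta_n\leq K/\lambda<+\infty$, the series $\sum_{n\leq 0}\beta_n\ell(x_{n-1},x_n,0,0)$ converges absolutely. This same summability forces $\beta_{-N+1}\to 0$ as $N\to +\infty$. Because by Proposition \ref{bdd} the fixed points $u_\lambda$ are uniformly bounded (by some constant $M$), and because $|1+\lambda\partial_u\ell(x_{-N},x_{-N+1},0,0)|\leq 1$ thanks to (l2) and $\lambda<\lambda_0$, the boundary term $(1+\lambda\partial_u\ell(x_{-N},x_{-N+1},0,0))\beta_{-N+1}u_\lambda(x_{-N})$ is bounded by $M\beta_{-N+1}$ and therefore tends to $0$ as $N\to +\infty$.

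Next, I would handle the error terms. Since $|\theta_n(\lambda)|\leq \lambda\varepsilon(\lambda)$ uniformly in $n$ (with $\varepsilon(\lambda)=\|u_\lambda\|_\infty\eta(\lambda\|u_\lambda\|_\infty)\to 0$ as $\lambda\to 0$, using that $\|u_\lambda\|_\infty$ is uniformly bounded and $\eta$ is a modulus of continuity), we get
\[\Big|\sum_{n=-N+1}^0\beta_n\theta_n(\lambda)\Big|\leq \lambda\varepsilon(\lambda)\sum_{n\leq 0}\beta_n\leq K\varepsilon(\lambda).\]
Passing to the limit $N\to +\infty$ in (\ref{ul=}) therefore yields
\[u_\lambda(x_0)=\sum_{n\leq 0}\beta_n\ell(x_{n-1},x_n,0,0)+\Omega(\lambda),\]
where $\Omega(\lambda):=\sum_{n\leq 0}\beta_n\theta_n(\lambda)$ satisfies $|\Omega(\lambda)|\leq K\varepsilon(\lambda)\to 0$ as $\lambda\to 0^+$.

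The only mildly delicate point is ensuring that the boundary contribution really vanishes: one must argue that summability of $\{\beta_n\}$ at fixed $\lambda$ (given by the previous proposition) implies $\beta_{-N+1}\to 0$, rather than only that the tail sum vanishes — this is immediate because summability of a nonnegative series implies its general term tends to zero. All other steps are direct consequences of (l3) (which controls the remainder $\theta_n$ uniformly) and the uniform boundedness of $u_\lambda$ (Proposition \ref{bdd}); nothing further is needed.
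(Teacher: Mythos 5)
Your proof is correct and follows essentially the same route as the paper: both arguments pass to the limit $N\to+\infty$ in \eqref{ul=}, use the bound $\lambda\sum_{n\leq 0}\beta_n\leq K$ to kill the boundary term via $\beta_{-N+1}\to 0$ and to bound $|\Omega(\lambda)|\leq K\varepsilon(\lambda)$. Your write-up merely makes explicit a few points the paper leaves implicit (absolute convergence of the series, the bound $0<1+\lambda\partial_u\ell\leq 1$, and why $\varepsilon(\lambda)\to 0$), all of which are correct.
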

\begin{proof}
Since $\lambda\sum_{n\leq 0}\beta_n\leq K$, we have $\beta_{-N+1}\to 0$ and
\[\bigg|\sum_{n=-N+1}^0\beta_n\theta_n(\lambda)\bigg|\leq \sum_{n\leq 0}\beta_n\lambda \varepsilon(\lambda)\leq K\varepsilon(\lambda)\to 0.\]
By (\ref{ul=}) we get (\ref{u==}), where $\Omega(\lambda):=\sum_{n\leq 0}\beta_n\theta_n(\lambda)$.
\end{proof}

\begin{definition} \rm
We define the following probability measures on $X\times X$
\[\mu^1_\lambda=C_\lambda^{-1} \sum_{k\leq -1}\frac{\beta_{k+1}}{1-\lambda\partial_ v\ell(x_{k-1},x_k,0,0)}\delta_{(x_k,x_{k+1})},\]
and
\[\mu^2_\lambda=C_\lambda^{-1} \sum_{k\leq -1}\frac{\beta_{k+1}}{1-\lambda\partial_ v\ell(x_{k-1},x_k,0,0)}\delta_{(x_{k-1},x_{k})},\]
where
\[C_\lambda:=\sum_{k\leq -1}\frac{\beta_{k+1}}{1-\lambda\partial_ v\ell(x_{k-1},x_k,0,0)}.\]
\end{definition}

Note that as $x_0$ will be fixed in what follows, we only specify explicitly the dependance of those measures in $\lambda$ but they also depend on $x_0$.


Since the functions $(z,x) \mapsto \partial_v \ell(z,x,0,0)$ and $(z,x) \mapsto \partial_u \ell(z,x,0,0)$ are bounded respectively by $\kappa_v$ and $\kappa_u$, it is easily observed that $C_\lambda \to +\infty$ (each term of the sum converges to $1$ as $\lambda \to 0$). Moreover  $ \lambda C_\lambda  \leq \lambda \sum_{k\leq 0}\beta_k\leq K$.

\begin{proposition}\label{u*>}
For each subsolution $w$ of $\ell_0$ and $\lambda\in (0,r)$, we have
\begin{align*}
u_{\lambda}(x)&\geq \beta_0w(x)
\\ &\quad +\lambda C_\lambda\int_{X\times X} \partial_u \ell(z,x,0,0)w(z)d\mu^1_\lambda(z,x)
\\ &\quad +\lambda C_\lambda\int_{X\times X}\partial_v \ell(z,x,0,0)w(x)d \mu^2_{\lambda}(z,x)+\Omega(\lambda).
\end{align*}
\end{proposition}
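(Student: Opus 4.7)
The plan is to start from the explicit formula \eqref{u==} and apply an Abel summation argument, using the subsolution inequality in a term by term manner (which is allowed since all the $\beta_n$ are non-negative: indeed, under the standing hypotheses $\partial_u\ell\leq 0$ and $\partial_v\ell\leq 0$, both numerator and denominator defining $\beta_n$ are positive as long as $\lambda\kappa_u<1$). First I would write, using $w(x_n)-w(x_{n-1})\leq \ell(x_{n-1},x_n,0,0)$ (the subsolution inequality with vanishing critical constant),
\[u_\lambda(x_0)\;\geq\;\sum_{n\leq 0}\beta_n\bigl(w(x_n)-w(x_{n-1})\bigr)\;+\;\Omega(\lambda).\]

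Next, to make the two measures $\mu^1_\lambda$ and $\mu^2_\lambda$ appear, I would perform an Abel (summation by parts) manipulation on the right hand side. Reindexing the second piece and telescoping yields, for each $N\geq 1$,
\[\sum_{n=-N+1}^{0}\beta_n\bigl(w(x_n)-w(x_{n-1})\bigr)=\beta_0 w(x_0)+\sum_{k=-N+1}^{-1}(\beta_k-\beta_{k+1})w(x_k)-\beta_{-N+1}w(x_{-N}).\]
The hard part in this step is to justify passing to the limit $N\to+\infty$. For this I would invoke the estimate $\lambda\sum_{k\leq 0}\beta_k\leq K$ established previously, which forces $\beta_{-N+1}\to 0$ and also guarantees absolute convergence of the remaining series (since $w$ is bounded on the compact space $X$). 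The boundary term then drops.

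The third step is a direct computation of $\beta_k-\beta_{k+1}$. From the definition of $\beta_n$ one immediately reads off the ratio
\[\frac{\beta_k}{\beta_{k+1}}=\frac{1+\lambda\,\partial_u\ell(x_k,x_{k+1},0,0)}{1-\lambda\,\partial_v\ell(x_{k-1},x_k,0,0)},\]
so that
\[\beta_k-\beta_{k+1}=\lambda\,\frac{\beta_{k+1}}{1-\lambda\,\partial_v\ell(x_{k-1},x_k,0,0)}\Bigl(\partial_u\ell(x_k,x_{k+1},0,0)+\partial_v\ell(x_{k-1},x_k,0,0)\Bigr).\]

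Finally I would split the resulting double series into the two pieces carrying $\partial_u\ell(x_k,x_{k+1},0,0)w(x_k)$ and $\partial_v\ell(x_{k-1},x_k,0,0)w(x_k)$ respectively, and match them against the Dirac sums defining $\mu^1_\lambda$ (supported on $(x_k,x_{k+1})$) and $\mu^2_\lambda$ (supported on $(x_{k-1},x_k)$); note that $w(x_k)=w(z)$ for $\mu^1_\lambda$ and $w(x_k)=w(x)$ for $\mu^2_\lambda$, precisely as required in the statement. Multiplying and dividing by $C_\lambda$ produces the prefactor $\lambda C_\lambda$ in front of each integral, which yields exactly the claimed lower bound. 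I do not expect any genuine obstacle beyond bookkeeping; the only delicate point is the convergence of the infinite Abel summation, which is handled by the bound on $\lambda\sum\beta_k$ together with $\beta_n\to 0$ at $-\infty$.
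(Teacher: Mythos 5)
Your proposal is correct and follows essentially the same route as the paper: start from the representation \eqref{u==}, bound $\ell(x_{n-1},x_n,0,0)$ below by $w(x_n)-w(x_{n-1})$ using nonnegativity of the $\beta_n$, then Abel-sum and identify $\beta_k-\beta_{k+1}=\lambda\,\beta_{k+1}\bigl(\partial_u\ell(x_k,x_{k+1},0,0)+\partial_v\ell(x_{k-1},x_k,0,0)\bigr)/\bigl(1-\lambda\partial_v\ell(x_{k-1},x_k,0,0)\bigr)$ with the weights defining $\mu^1_\lambda$ and $\mu^2_\lambda$. Your explicit justification of the convergence of the infinite summation by parts (via $\lambda\sum_{k\leq 0}\beta_k\leq K$ and $\beta_{-N+1}\to 0$) is a detail the paper leaves implicit, but it is exactly the right one.
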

\begin{proof}
By (\ref{u==}) we have
\begin{equation*}\label{u>w}
\begin{aligned}
u_\lambda(x)&\geq \sum_{n\leq 0}\beta_n \big(w(x_{n})-w(x_{n-1})\big)+\Omega(\lambda)
\\ &=\beta_0w(x)+\lambda\sum_{k\leq -1}\frac{\beta_{k+1}}{1-\lambda\partial_v\ell(x_{k-1},x_k,0,0)}\partial_u \ell(x_{k},x_{k+1},0,0)w(x_k)
\\ &\quad +\lambda\sum_{k\leq -1}\frac{\beta_{k+1}}{1-\lambda\partial_v\ell(x_{k-1},x_k,0,0)}\partial_v \ell(x_{k-1},x_{k},0,0)w(x_k)+\Omega(\lambda)
\\ &\geq \beta_0w(x)+\lambda C_\lambda\int_{X\times X} \partial_u \ell(z,x,0,0)w(z)d\mu^1_\lambda
\\ &\quad +\lambda C_\lambda\int_{X\times X}\partial_v \ell(z,x,0,0)w(x)d \mu^2_{\lambda}(z,x)+\Omega(\lambda).
\end{aligned}
\end{equation*}
\end{proof}

\begin{lemma}\label{samelimit}
The limits of $\mu^1_\lambda$ and $\mu^2_\lambda$ coincide in the weak* topology as $\lambda\to 0$. That is, if there is a sequence $\lambda_n\to 0$ such that $\mu^1_{\lambda_n}\to \mu$, then $\mu^2_{\lambda_n}\to \mu$.
\end{lemma}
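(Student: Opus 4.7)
I would show directly that $\mu^1_\lambda - \mu^2_\lambda \to 0$ in the weak-$\ast$ topology as $\lambda \to 0$; the conclusion then follows, since weak-$\ast$ limits along a common subsequence must coincide. For a fixed test function $f\in C^0(X\times X,\R)$, I would introduce the common weights
\[a_k := \frac{\beta_{k+1}}{1-\lambda \partial_v\ell(x_{k-1},x_k,0,0)}, \quad k \leq -1,\]
so that $C_\lambda = \sum_{k\leq -1} a_k$ and
\[C_\lambda\Big(\int f\, d\mu^1_\lambda - \int f\, d\mu^2_\lambda\Big) = \sum_{k\leq -1} a_k\big[f(x_k,x_{k+1}) - f(x_{k-1},x_k)\big].\]

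The central step is to establish that $(a_k)_{k\leq -1}$ is non-decreasing in $k$. A direct computation from the product formula of $\beta$ gives
\[\frac{a_{k+1}}{a_k} = \frac{1-\lambda \partial_v\ell(x_{k-1},x_k,0,0)}{1+\lambda \partial_u\ell(x_{k+1},x_{k+2},0,0)} \geq 1,\]
thanks to the sign conditions $\partial_u\ell,\partial_v\ell \leq 0$ coming from (l2). With this in hand, I would apply Abel summation on the window $\{-N,\ldots,-1\}$: writing $g_k := f(x_k,x_{k+1})$ and reindexing the shifted sum, one obtains
\[\sum_{k=-N}^{-1} a_k\big[g_k - g_{k-1}\big] = a_{-1}g_{-1} - a_{-N}g_{-N-1} + \sum_{k=-N}^{-2}(a_k - a_{k+1})g_k.\]
Because $a_{k+1}\geq a_k$, the middle telescoping sum has absolute value bounded by $(a_{-1}-a_{-N})\|f\|_\infty$, and combining the three terms yields the uniform bound $2a_{-1}\|f\|_\infty$, independent of $N$. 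Since $\sum a_k = C_\lambda <\infty$ forces $a_{-N}\to 0$ as $N\to\infty$, this bound persists at the limit.

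Dividing by $C_\lambda$ and recalling that $a_{-1}$ remains bounded (in fact $a_{-1}\to 1$) while the previous proposition gives $C_\lambda \to +\infty$, we conclude
\[\Big|\int f\, d\mu^1_\lambda - \int f\, d\mu^2_\lambda\Big| \leq \frac{2a_{-1}\|f\|_\infty}{C_\lambda} \longrightarrow 0\]
as $\lambda\to 0$, which is the desired convergence. The main technical obstacle is the monotonicity $a_k \leq a_{k+1}$, but this is essentially forced by the sign conditions on $\partial_u\ell$ and $\partial_v\ell$; everything else reduces to a clean Abel summation combined with the already-established divergence of $C_\lambda$.
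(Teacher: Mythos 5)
Your proof is correct, and it follows the same overall skeleton as the paper's: shift the index in the $\sum_k a_k f(x_{k-1},x_k)$ term so that both sums carry the same test values $f(x_k,x_{k+1})$, then control the resulting sum of coefficient increments and divide by $C_\lambda\to+\infty$. Where you genuinely diverge is in how that increment sum is controlled. The paper computes the difference $a_k-a_{k+1}$ explicitly, finds it equal to $\lambda\beta_{k+1}\frac{\partial_u\ell(x_{k+1},x_{k+2},0,0)+\partial_v\ell(x_{k-1},x_k,0,0)}{(1-\lambda\partial_v\ell)(1+\lambda\partial_u\ell)}$, bounds it term by term using $|\partial_u\ell|\leq\kappa_u$, $|\partial_v\ell|\leq\kappa_v$, and then invokes the a priori estimate $\lambda\sum_{k\leq 0}\beta_k\leq K$ to get a bound of order $\big(1+\frac{\kappa_u+\kappa_v}{1-\lambda\kappa_u}K\big)C_\lambda^{-1}\|f\|_\infty$. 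You instead observe that $a_{k+1}/a_k=\frac{1-\lambda\partial_v\ell(x_{k-1},x_k,0,0)}{1+\lambda\partial_u\ell(x_{k+1},x_{k+2},0,0)}\geq 1$ (a correct computation, and the same monotonicity $\beta_k\leq\beta_{k+1}$ that the paper uses elsewhere), so that $\sum_{k=-N}^{-2}|a_k-a_{k+1}|$ telescopes exactly to $a_{-1}-a_{-N}\leq a_{-1}\leq 1$; this gives the cleaner bound $2a_{-1}\|f\|_\infty/C_\lambda\leq 2\|f\|_\infty/C_\lambda$, which does not need the constant $K$ at all in the final estimate (you still need the finiteness of $C_\lambda$, hence the previous proposition, to justify $a_{-N}\to 0$ and the convergence of the series, and of course $C_\lambda\to+\infty$). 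The trade-off is that your argument is tied to the sign conditions (l2) that make $(a_k)$ monotone, whereas the paper's term-by-term Lipschitz estimate would survive without monotonicity as long as $\lambda\sum\beta_k$ stays bounded; in the present setting both hypotheses hold, so your route is a legitimate and slightly more economical alternative.
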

\begin{proof}
For all $f\in C^0(X\times X,\R)$, we have
\begin{equation*}
\begin{aligned}
&\bigg|\int_{X\times X}f(z,x)d(\mu^1_\lambda-\mu^2_\lambda)\bigg|
\\ &=C_\lambda^{-1}\bigg|\sum_{k\leq -1}\frac{\beta_{k+1}}{1-\lambda\partial_ v\ell(x_{k-1},x_k,0,0)}\big(f(x_k,x_{k+1})-f(x_{k-1},x_k)\big)\bigg|
\\ &=C_\lambda^{-1}\bigg|\frac{\beta_{0}}{1-\lambda\partial_ v\ell(x_{-2},x_{-1},0,0)}f(x_{-1},x_0)+\sum_{k\leq -2}\frac{\beta_{k+1}}{1-\lambda\partial_ v\ell(x_{k-1},x_k,0,0)}f(x_k,x_{k+1})
\\ &\quad -\sum_{k\leq -2}\frac{\beta_{k+1}}{1+\lambda\partial_u \ell(x_{k+1},x_{k+2},0,0)}f(x_k,x_{k+1})\bigg|
\\ &\leq C_\lambda^{-1} \|f\|_\infty
\\ &\quad +\lambda C_\lambda^{-1}\bigg|\sum_{k\leq -2}\frac{\partial_u \ell(x_{k+1},x_{k+2},0,0)+\partial_v\ell(x_{k-1},x_k,0,0)}{\big(1-\lambda\partial_ v\ell(x_{k-1},x_k,0,0)\big)\big(1+\lambda\partial_u \ell(x_{k+1},x_{k+2},0,0)\big)}\beta_{k+1}f(x_k,x_{k+1})\bigg|
\\ &\leq C_\lambda^{-1} \|f\|_\infty+\lambda C_\lambda^{-1} \frac{\kappa_u+\kappa_v}{1-\lambda\kappa_u}\sum_{k\leq -1}\beta_{k+1}\|f\|_\infty
\\ &\leq \left(1+\frac{\kappa_u+\kappa_v}{1-\lambda\kappa_u}K\right)C_\lambda^{-1}\|f\|_\infty\to 0.
\end{aligned}
\end{equation*}
Indeed, recall that $C_\lambda\to +\infty$ as $\lambda \to 0$.
\end{proof}

\begin{proposition}\label{mathermeasure}
Any limit $\mu$ of $d\mu^{1,2}_{\lambda_n}$ as $\lambda_n\to 0$ is a Mather measure of $\ell(z,x,0,0)$.
\end{proposition}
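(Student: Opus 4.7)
By Lemma \ref{samelimit} the sequences $(\mu^1_{\lambda_n})_n$ and $(\mu^2_{\lambda_n})_n$ share the same weak$*$ limit, so it suffices to show that any weak$*$ limit $\mu$ of $(\mu^1_{\lambda_n})_n$ is a Mather measure. Since the critical constant of $\ell_0$ is normalized to $0$, this amounts to checking two things:
(i) $\mu$ is a closed probability measure;
(ii) $\int_{X\times X}\ell_0\,d\mu=0$.

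To handle (i), set $a_k:=\beta_{k+1}/(1-\lambda\partial_v\ell(x_{k-1},x_k,0,0))$ so that $\mu^1_\lambda=C_\lambda^{-1}\sum_{k\le -1}a_k\,\delta_{(x_k,x_{k+1})}$. The recurrence defining $(\beta_n)$ (see Lemma \ref{beta}) gives
\[
\frac{\beta_{k+1}}{\beta_k}=\frac{1-\lambda\partial_v\ell(x_{k-1},x_k,0,0)}{1+\lambda\partial_u\ell(x_k,x_{k+1},0,0)},
\]
from which one deduces, for any continuous $f:X\to\R$, by a discrete integration by parts,
\[
\int_{X\times X}\bigl(f(x)-f(z)\bigr)\,d\mu^1_\lambda=C_\lambda^{-1}\Bigl[a_{-1}f(x_0)+\sum_{j\le -1}(a_{j-1}-a_j)f(x_j)\Bigr].
\]
Using the same recurrence, $a_{j-1}-a_j=a_j\cdot\lambda\,\bigl(\partial_u\ell(x_j,x_{j+1},0,0)+\partial_v\ell(x_{j-2},x_{j-1},0,0)\bigr)/(1-\lambda\partial_v\ell(x_{j-2},x_{j-1},0,0))$, which is bounded in absolute value by $\lambda(\kappa_u+\kappa_v)a_j$. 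Summing and using $\sum_j a_j=C_\lambda$ gives
\[
\Bigl|\int(f(x)-f(z))\,d\mu^1_\lambda\Bigr|\le \|f\|_\infty\bigl(C_\lambda^{-1}a_{-1}+\lambda(\kappa_u+\kappa_v)\bigr)\xrightarrow{\lambda\to 0}0,
\]
since $a_{-1}$ remains bounded while $C_\lambda\to+\infty$. Passing to the limit shows $\int(f(x)-f(z))\,d\mu=0$, so $\mu$ is closed.

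For (ii), I would start from the explicit formula \eqref{u==}, rewritten by reindexing $n=k+1$ as $\sum_{k\le -1}\beta_{k+1}\ell_0(x_k,x_{k+1})=u_\lambda(x_0)-\Omega(\lambda)$. Inserting the factor $1/(1-\lambda\partial_v\ell(x_{k-1},x_k,0,0))$ needed for $\mu^1_\lambda$ costs an error
\[
\int\ell_0\,d\mu^1_\lambda-C_\lambda^{-1}\!\!\sum_{k\le -1}\beta_{k+1}\ell_0(x_k,x_{k+1})
=C_\lambda^{-1}\sum_{k\le -1}\beta_{k+1}\ell_0(x_k,x_{k+1})\frac{\lambda\partial_v\ell(x_{k-1},x_k,0,0)}{1-\lambda\partial_v\ell(x_{k-1},x_k,0,0)}
\]
whose absolute value is bounded by $C_\lambda^{-1}\|\ell_0\|_\infty\,\kappa_v\,\lambda\sum_{k\le 0}\beta_k\le C_\lambda^{-1}\|\ell_0\|_\infty\kappa_v K\to 0$, using the uniform bound $\lambda\sum_{k\le 0}\beta_k\le K$. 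Meanwhile $C_\lambda^{-1}(u_\lambda(x_0)-\Omega(\lambda))\to 0$ by the equibounded\-ness of the family $(u_\lambda)$ (Proposition \ref{bdd}) and $\Omega(\lambda)\to 0$. Therefore $\int\ell_0\,d\mu^1_\lambda\to 0$, which by continuity of $\ell_0$ gives $\int\ell_0\,d\mu=0$.

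Combining (i) and (ii), $\mu$ is a closed probability measure attaining the minimum $-c_0=0$, hence a Mather measure of $\ell_0$. The one delicate point is the closedness argument: the naive telescoping of $\int(f(x)-f(z))\,d\mu^1_\lambda$ does not collapse because the coefficients $a_k$ are not all equal, so one must exploit the explicit recurrence for $(\beta_n)$ to control the consecutive differences $|a_{j-1}-a_j|$ uniformly by $\lambda(\kappa_u+\kappa_v)a_j$ and then absorb the factor $\lambda$ against $\sum_j a_j=C_\lambda$.
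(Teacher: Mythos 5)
Your proof is correct and follows essentially the same route as the paper: closedness via an Abel summation controlled by the recurrence $\beta_{k+1}/\beta_k=(1-\lambda\partial_v\ell(x_{k-1},x_k,0,0))/(1+\lambda\partial_u\ell(x_k,x_{k+1},0,0))$ and the bound $\lambda\sum_k\beta_k\le K$, and the minimizing property via the representation formula \eqref{u==} divided by $C_\lambda\to+\infty$. The only (immaterial, given Lemma \ref{samelimit}) difference is that you verify $\int\ell_0\,d\mu^1_\lambda\to 0$ directly, whereas the paper runs the analogous estimate through $\mu^2_\lambda$ with the weight $1-\lambda\partial_v\ell_0$; your indexing in the discrete integration by parts is in fact the cleaner of the two.
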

\begin{proof}
We first prove that $\mu$ is {\bf closed}. Let $f\in C^0(X,\R)$, then
\begin{align*}
&\int_{X\times X}\big(f(x)-f(z)\big)d\mu^1_\lambda(z,x)
\\ &=C_\lambda^{-1} \sum_{k\leq -1}\frac{\beta_{k+1}}{1-\lambda\partial_ v\ell(x_{k-1},x_k,0,0)}\big(f(x_{k+1})-f(x_k)\big)
\\ &=C_\lambda^{-1} \frac{\beta_{0}}{1-\lambda\partial_ v\ell(x_{-2},x_{-1},0,0)}f(x)
\\ &\quad +\lambda C_\lambda^{-1} \sum_{k\leq -1}\beta_{k+1}\frac{\partial_ u\ell(x_{k-2},x_{k-1},0,0)+\partial_ v\ell(x_{k},x_{k+1},0,0)}{\big(1-\lambda\partial_ v\ell(x_{k-2},x_{k-1},0,0)\big)\big(1-\lambda\partial_ v\ell(x_{k-1},x_{k},0,0)\big)}f(x_k).
\end{align*}
Since $-\kappa_v<\partial_v \ell(z,x,0,0)\leq 0$ and $-\kappa_v\leq \partial_u \ell(z,x,0,0)\leq 0$, we have
\begin{multline*}
\bigg|\int_{X\times X}\big(f(x)-f(z)\big)d\mu^1_\lambda(z,x)\bigg|\\
\leq C_\lambda^{-1}\|f\|_\infty(1+(\kappa_u+\kappa_v)\lambda\sum_{k\leq 0}\beta_k)\\
\leq (1+(\kappa_u+\kappa_v)K)C_\lambda^{-1}\|f\|_\infty\to 0.
\end{multline*}
Then we prove $\mu$ is {\bf minimizing}. By (\ref{u==}) we have
\begin{align*}
\bigg|\int_{X\times X}  &\ell(z,x,0,0)\big(1-\lambda\partial_v \ell_0(z,x)\big)d\mu^2_\lambda(z,x)-C_\lambda^{-1}u_\lambda(x_0)\bigg|
\\ &=C_\lambda^{-1}\bigg|\sum_{k\leq -1}(\beta_{k+1}-\beta_k)\ell(x_{k-1},x_{k},0,0)-\beta_0 \ell_0(x_{-1},x)-\Omega(\lambda)\bigg|
\\ &\leq C_\lambda^{-1}\bigg(\bigg|\lambda\sum_{k\leq -1}\frac{\partial_u \ell_0(x_{k},x_{k+1},0,0)+\partial_v \ell(x_{k-1},x_k,0,0)}{1-\lambda\partial_v \ell(x_{k-1},x_k,0,0)}\beta_{k+1}\ell_0(x_{k-1},x_{k})\bigg| \\
&\qquad \qquad+\|\ell_0\|_\infty+K\varepsilon(\lambda)\bigg)
\\ &\leq C_\lambda^{-1}\big(\lambda \sum_{k\leq 0}(\kappa_u+\kappa_v)\beta_k\|\ell_0\|_\infty+\|\ell_0\|_\infty+K\varepsilon(\lambda)\big)
\\
&\leq C_\lambda^{-1}\big(((\kappa_u+\kappa_v)K+1)\|\ell_0\|_\infty+K\varepsilon(\lambda)\big)\to 0.
\end{align*}
Let $\lambda\to 0$, we get $\int_{X\times X}\ell_0(z,x)d\mu_\lambda(z,x)=0$.
\end{proof}

\bigskip

We now turn to the proof of the main theorem. Recall that  $\mathcal S_0$ is the set of subsolutions $w$ of $\ell_0$ that satisfy
\begin{equation*}
\int_{X\times X}\big(\partial_u \ell(z,x,0,0)w(z)+\partial_v\ell(z,x,0,0)w(x)\big)d\mu(z,x)\geq 0,
\end{equation*}
for all Mather measures $\mu$ of $\ell_0$.

\noindent \emph{ Proof of Theorem \ref{m1}.}
We first show that $u_0$ is well-define, that is,  functions in $\mathcal S_0$ are uniformly bounded from above. Assume there is $w\in\mathcal S_0$ such that $w\geq \delta>0$, then by (\ref{cond}) we have
\begin{align*}
0&\leq \int_{X\times X}\big(\partial_u \ell(z,x,0,0)w(z)+\partial_v\ell(z,x,0,0)w(x)\big)d\mu(z,x)
\\ &\leq \delta\int_{X\times X}\big(\partial_u \ell(z,x,0,0)+\partial_v\ell(z,x,0,0)\big)d\mu(z,x),
\end{align*}
which contradicts (l4). Therefore, for all $w\in\mathcal S_0$, there is $x_0\in X$ such that $w(x_0)\leq 0$. By the equi-continuity of subsolutions of $\ell_0$, the result follows.

Recall that we consider a decreasing sequence $\lambda_n \to 0$ such that $u_{\lambda_n} \to u_*$ uniformly.

Since for 	all $(x,z)\in X\times X$,
\[u_\lambda(x)-u_\lambda(z)\leq \ell\big(z,x,\lambda u_\lambda(x),\lambda u_\lambda(z)\big).\]
Let $\lambda\to 0$ we get
\[u_*(z)-u_*(x)\leq \ell(z,x,0,0),\]
which means $u_*$ is a subsolution of $\ell_0$. By Proposition \ref{u*<}, $u_*\in\mathcal S_0$, which implies $u_*\leq u_0$.

Let now $x_0\in X$ and up to a further extraction, assume that the associated measures converge. By Lemma \ref{samelimit} the limits are the same and  $ \lim\limits_{n\to +\infty} \mu_{\lambda_n}^1 =   \lim\limits_{n\to +\infty} \mu_{\lambda_n}^2 = \mu$ is a Mather measure by Proposition \ref{mathermeasure}.
For each $w\in\mathcal S_0$, by Proposition \ref{u*>}, we have
\begin{align*}
u_*(x_0)&\geq \lim_{n\to +\infty}\beta_0 w(x_0)
\\ &\quad +\limsup_{n\to +\infty}\lambda_n C_{\lambda_n}\bigg(\int_{X\times X} \partial_u \ell(z,x,0,0)w(z)d\mu^1_{\lambda_n}+\int_{X\times X}\partial_v \ell(z,x,0,0)w(x)d \mu^2_{\lambda_n}\bigg)\\
&=w(x_0) + \limsup_{n\to +\infty}\lambda_n C_{\lambda_n}\int_{X\times X} \big(\partial_u \ell(z,x,0,0)+\partial_v \ell(z,x,0,0)\big)w(x)d \mu
\\ &\geq w(x_0),
\end{align*}
where we have used that $w\in \mathcal S_0$.
Therefore, $u_*(x_0)\geq \sup_{w\in\mathcal S_0}w(x_0)= u_0(x_0)$. We finally get
$u_*=u_0$.

Now we prove that $u_0$ is a fixed point of $T_0$. We have seen that $u_0=u_*$ is a subsolution. Let $x_0 \in X$. Since $X$ is compact, let $z_n$ be a point realizing the minimum in $u_{\lambda_n}(x_0)$ and up to extracting, assume $z_{\lambda_n}\to z_*$. By
\[u_{\lambda_n}(x_0)-u_{\lambda_n}(z_{\lambda_n})=\ell\big(z_{\lambda_n},x_0,\lambda_n u_{\lambda_n}(z_{\lambda_n}),\lambda_n u_{\lambda_n}(x_0)\big),\]
we get
\[u_0(x_0)-u_0(z_*)=\ell(z_*,x_0,0,0).\]
Thus, $u_0$ is a fixed point of $T_0$.
\qed
 \begin{remark}\rm
 As a byproduct of the previous proof, we have also proven that $u_0\in \mathcal S_0$.

 \end{remark}

\bigskip
We finish this section by the alternative representation formula of $u_0$.

\noindent {\it Proof of Theorem \ref{alternative}.}
Define
 \[\hat u_0(x)=\min_{\mu\in\mathfrak M_0}\frac{\int_{X\times X}\Big(\partial_u\ell(z,y,0,0)h(z,x)+\partial_v\ell(z,y,0,0)h(y,x)\Big)d\mu(z,y)}
{ \int_{X\times X}\Lambda(z,y)d\mu(z,y)},\]

where $\mathfrak M_0$ denotes the set of Mather measures of $\ell_0$. Note first that for each $\mu\in \mathfrak M_0$,
 the function
$$x\mapsto \frac{\int_{X\times X}\Big(\partial_u\ell(z,y,0,0)h(z,x)+\partial_v\ell(z,y,0,0)h(y,x)\Big)d\mu(z,y)}
{ \int_{X\times X}\Lambda(z,y)d\mu(z,y)},$$
is a subsolution of $\ell_0$. Indeed each $-h(z,\cdot)$ is a subsolution (Proposition \ref{proph}) hence the integral is a barycenter of subsolutions (Proposition \ref{conv}). Hence $\hat u_0$ is also a subsolution for $\ell_0$ as an infimum of subsolutions.

\noindent \underline{{\bf proof that $u_0\leq \hat u_0$}}: Let $x\in X$ and $\mu\in \frak M_0$. Integrating the inequalities $ u_0(x)\leqslant  u_0 (z) + h(z,x)$ recalled in Proposition \ref{proph}, we find that
\begin{equation}\label{inequ}u_0(x)\int_{X\times X}\partial_u\ell(z,y,0,0)d\mu(z,y)\geq \int_{X\times X}\partial_u\ell(z,y,0,0)\big(u_0(z)+h(z,x)\big)d\mu(z,y),
\end{equation}
\begin{equation}\label{ineqv}
u_0(x)\int_{X\times X}\partial_v\ell(z,y,0,0)d\mu(z,y)\geq \int_{X\times X}\partial_v\ell(z,y,0,0)\big(u_0(y)+h(y,x)\big)d\mu(z,y).
\end{equation}
Recall that $u_0\in \mathcal S_0$ so that
$$\int_{X\times X}\big(\partial_u \ell(z,y,0,0)u_0(z)+\partial_v\ell(z,y,0,0)u_0(y)\big)d\mu(z,y)\geq 0.$$

Therefore, summing \eqref{inequ} and \eqref{ineqv} and using the previous inequality, we obtain
$$u_0(x)\int_{X\times X}\Lambda(z,y)d\mu(z,y)\geq \int_{X\times X}\Big(\partial_u\ell(z,y,0,0)h(z,x)+\partial_v\ell(z,y,0,0)h(y,x)\Big)d\mu(z,y).$$
Dividing by $\int_{X\times X}\Lambda(z,y)d\mu(z,y)<0$ and taking a minimum over $\mu\in \mathfrak M_0$ yields the desired $u_0(x)\leqslant \hat u_0(x)$.

%

\underline{{\bf proof  that $u_0\geq \hat u_0$}}: We first show $v_y(\cdot):=-h(\cdot,y)+\hat u_0(y)\in\mathcal S_0$ for all $y\in X$. Let $\mu \in \mathfrak M_0$.
We get
\begin{align*}
&\int_{X\times X}\big(\partial_u \ell(z,x,0,0)v_y(z)+\partial_v \ell(z,x,0,0)v_y(x)\big)d\mu(z,x)
\\ &\quad =-\int_{X\times X}\Big(h(z,y)\partial_u \ell(z,x,0,0) + h(x,y)\partial_v \ell(z,x,0,0)\Big) d\mu(z,x)
\\ &\quad\quad +\int_{X\times X}\Lambda(z,x)\hat u_0(y)d\mu(z,x)
\\&\quad =-\int_{X\times X}\Big(h(z,y)\partial_u \ell(z,x,0,0) + h(x,y)\partial_v \ell(z,x,0,0)\Big) d\mu(z,x)
\\&+\int_{X\times X}\Lambda(z,x)d\mu(z,x)
\min_{\tilde\mu\in\mathfrak M_0}\frac{\int_{X\times X}\Big(\partial_u\ell(z,x,0,0)h(z,y)+\partial_v\ell(z,x,0,0)h(x,y)\Big)d\tilde\mu(z,x)}
{ \int_{X\times X}\Lambda(z,x)d\tilde\mu(z,x)}
\\ & &{\geq 0}.
\end{align*}
It follows that $v_y\leqslant u_0$ and evaluating at $y$ yields
 $u_0(y)\geq -h(y,y)+\hat u_0(y)$. Let $y\in\mathfrak A$, we have $h(y,y)=0$, and $u_0(y)\geq \hat u_0(y)$. By comparison (Proposition \ref{comparison}), as $u_0$ is a solution and $\hat u_0$ a subsolution, we finally get $u_0\geq \hat u_0$.
\qed

\section{Uniqueness of $u_\lambda$}

\begin{result}\label{thm2}
The fixed point $u_\lambda$ of $T_\lambda$ is unique if $\lambda$ is small and one of the following holds
\begin{itemize}
\item[(1)] $\ell(z,x,u,v)$ is concave in $u$ and concave in  $v$;
\item[(2)] $\partial_{u,v}\ell(z,x,u,v)$ exist and are continuous for $(u,v)$ near $(0,0)$.
\end{itemize}
\end{result}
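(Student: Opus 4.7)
The plan is to argue by contradiction, echoing and extending the orbit-and-Mather-measure structure of the proof of Proposition~\ref{bdd}. Suppose uniqueness fails: there exist $\lambda_n \to 0^+$ and, for each $n$, two fixed points $u_n, v_n \in S_{\lambda_n}$. After a possible relabeling, set $M_n := \max_{x \in X}(u_n - v_n)(x) > 0$, attained at some $x_0^n \in X$. Note that by Theorem~\ref{m1} we already know $M_n \to 0$ (both $u_n$ and $v_n$ converge to the same $u_0$), so the extra information we need to extract is not asymptotic but \emph{exact} along an orbit.

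First, I build for each $n$ a backward orbit along which $u_n - v_n \equiv M_n$, mimicking Step~2 of Proposition~\ref{bdd} but with the pair $(u_n, v_n)$ replacing $(u_\lambda, \bar u)$. Let $x_{-1}^n$ realize the infimum in the fixed-point equation for $v_n(x_0^n)$. Subtracting the $v_n$-equation from the $u_n$-inequality and splitting the resulting Lagrangian difference into its $u$-part $E_1$ and $v$-part $E_2$, monotonicity~(l2) together with $\lambda_n \kappa_u < 1$ (after a short case split that rules out $(u_n - v_n)(x_{-1}^n) \leq 0$) forces $(u_n - v_n)(x_{-1}^n) = M_n$ and $E_1 = E_2 = 0$. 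Iterating yields $(x_{-k}^n)_{k \geq 0}$ with $(u_n - v_n)(x_{-k}^n) = M_n$ for all $k$, together with the exact equalities: $a \mapsto \ell\bigl(x_{-k-1}^n, x_{-k}^n, a, \lambda_n u_n(x_{-k}^n)\bigr)$ is constant on $\bigl[\lambda_n v_n(x_{-k-1}^n), \lambda_n u_n(x_{-k-1}^n)\bigr]$, and $b \mapsto \ell\bigl(x_{-k-1}^n, x_{-k}^n, \lambda_n v_n(x_{-k-1}^n), b\bigr)$ is constant on $\bigl[\lambda_n v_n(x_{-k}^n), \lambda_n u_n(x_{-k}^n)\bigr]$.

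Next, form the empirical measures $\mu_n^N := N^{-1}\sum_{k=-N}^{-1} \delta_{(x_k^n, x_{k+1}^n)}$ and extract a weak-$\ast$ limit $\mu$ by a diagonal argument, exactly as in Propositions~\ref{bdd} and~\ref{mathermeasure}. Closedness is telescoping; minimality follows because along the orbit $u_n(x_k^n) - u_n(x_{k-1}^n) = \ell\bigl(x_{k-1}^n, x_k^n, \lambda_n u_n(x_{k-1}^n), \lambda_n u_n(x_k^n)\bigr) = \ell_0(x_{k-1}^n, x_k^n) + O(\lambda_n)$ (via~(l3) and the uniform bound on $u_n$), whence $\int \ell_0\, d\mu_n^N = O(1/N) + O(\lambda_n)$ and $\mu \in \mathfrak{M}_0$ in the double limit.

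The final step is to translate the exact constancy statements of the orbit construction into the pointwise identity $\partial_u \ell(z,x,0,0) = \partial_v \ell(z,x,0,0) = 0$ on $\supp\mu$; integrating against $\mu$ will then contradict~(l4). In case~(2), $\partial_u \ell$ and $\partial_v \ell$ are defined and continuous near $(0,0)$, so the constancy forces these partials to vanish at points $(x_{-k-1}^n, x_{-k}^n, u_\ast^n, v_\ast^n)$ with $u_\ast^n, v_\ast^n \to 0$; passing to the limit along a subsequence with $(x_{-k-1}^n, x_{-k}^n) \to (z, x) \in \supp\mu$, continuity yields the claim, and symmetrically for $\partial_v$. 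In case~(1), concavity in $u$ together with nowhere-positivity of the derivative from~(l2) extends the constancy on $[\lambda_n v_n, \lambda_n u_n]$ to constancy on $(-\infty, \lambda_n u_n(x_{-k-1}^n)]$ (a concave function with zero derivative on an interval and derivative everywhere $\leq 0$ must be constant below the interval), yielding $\partial_u^+ \ell(z,x,0,\lambda_n u_n(x_{-k}^n)) = 0$; combined with the linearization bound~(l3), this forces $\partial_u \ell(z,x,0,0) = 0$ in the limit, and symmetrically for~$\partial_v$. The main obstacle I foresee is precisely this last limit exchange in case~(1), where one must synchronize the vanishing of the interval width with that of $\lambda_n$ using only the upper semi-continuity of one-sided derivatives of concave functions and the scalar linearization from~(l3), rather than the two-sided continuity available in case~(2).
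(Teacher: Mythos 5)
Your overall architecture is the same as the paper's: argue by contradiction with two fixed points, follow a backward minimizing orbit of $v_\lambda$ from a maximizer of $u_\lambda-v_\lambda$, propagate the maximum and turn the saturated inequalities into constancy of $\ell(z,x,\cdot,\cdot)$ on a nondegenerate rectangle, push the empirical measures of the orbit to a closed limit measure, and contradict (l4). Your case (2) is sound and is essentially the paper's argument reorganized: the paper works with a uniform estimate on orbit averages of $\Lambda$ and contradicts it directly, while you pass to points of $\supp\mu$ and use continuity of $\partial_{u,v}\ell$ near $(0,0)$; both are fine, and your choice of letting $\lambda_n\to 0$ from the outset is what makes the limit measure a genuine Mather measure of $\ell_0$ (via $\int\ell_0\,d\mu_n^N=O(1/N)+O(\lambda_n)$), exactly as in the paper's case (2).

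Case (1), however, has a genuine gap at the step you yourself flag, and it is not merely a technicality. Concavity plus monotonicity does extend the constancy to the ray $(-\infty,c]$ with $c=\lambda_n u_n(x^n_{-k-1})$, but $c$ may well be negative, so this ray need not contain $u=0$; and a concave non-increasing function that is constant on $(-\infty,c]$ can have strictly negative right derivative at $c$ (take $a\mapsto\min(0,c-a)$), so your claim $\partial_u^+\ell\bigl(z,x,0,\lambda_n u_n(x^n_{-k})\bigr)=0$ does not follow, and the subsequent ``limit exchange with (l3)'' is precisely the missing argument. Note that the paper avoids this entirely and, unlike your scheme, works at a \emph{fixed} small $\lambda$ in case (1): constancy on the rectangle plus concavity and monotonicity shows the constant value is $\max_{r,s}\ell(z,x,r,s)$, hence $\ell(z,x,\cdot,\cdot)$ is constant on $(-\infty,\lambda u_\lambda(z)]\times(-\infty,\lambda u_\lambda(x)]$; picking $u_0<0$ with $u_0\leq\min_X\lambda u_\lambda$ gives $0=\int\ell(z,x,u_0,u_0)\,d\mu\geq\int\ell_0\,d\mu\geq 0$, which both certifies $\mu\in\mathfrak M_0$ and forces $\ell(z,x,\cdot,\cdot)$ to be constant on $[u_0,0]^2$ for $(z,x)\in\supp\mu$, so the two-sided partials at $(0,0)$ (which exist by (l3)) vanish. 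If you insist on your $\lambda_n\to 0$ setup, the cleanest repair is: for any fixed $\delta>0$ and $n$ large the constancy region contains $(-\infty,-\delta]^2$ because $\lambda_n\|u_n\|_\infty\to 0$, and the constant value $\ell\bigl(z_n,x_n,\lambda_n u_n(z_n),\lambda_n u_n(x_n)\bigr)$ tends to $\ell_0(z,x)$; hence $\ell(z,x,-\delta,-\delta)=\ell(z,x,0,0)$ for every $\delta>0$, monotonicity gives constancy on $[-\delta,0]^2$, and the partials at $(0,0)$ vanish. Either way the step must be rewritten; as stated it does not close.
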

\begin{proof}
We argue by contradiction. Let $u_\lambda$ and $v_\lambda$ be two fixed points. Assume
\[u_\lambda(x_0)-v_\lambda(x_0)=\max_{x\in X}\big(u_\lambda(x)-v_\lambda(x)\big)>0.\]
Let $(x_{-k})_{k\in \mathbb N}$ be a minimizing sequence associated to  $v_\lambda(x_0)$ as defined in Remark \ref{xn}.

\noindent {\bf Step 1.} We first show that if $\lambda$ is small enough, we have $u_\lambda(x_{-k})>v_\lambda(x_{-k})$ for all $k\geq 0$. Assume $u_\lambda(x_{-1})\leq v_\lambda(x_{-1})$. By (l1) and (l2), we have
\begin{align*}
u_\lambda(x_0)-u_\lambda(x_{-1})&\leq \ell\big(x_{-1},x,\lambda u_\lambda(x_{-1}),\lambda u_\lambda(x_0)\big)
\\ &\leq \ell\big(x_{-1},x_0,\lambda u_\lambda(x_{-1}),\lambda v_\lambda(x_0)\big)
\\ &\leq \ell\big( x_{-1},x_0,\lambda v_\lambda(x_{-1}),\lambda v_\lambda(x_0)\big)+\lambda \kappa_u(v_\lambda-u_\lambda)(x_{-1})
\\ &=v_\lambda(x_0)-v_\lambda(x_{-1})+\lambda \kappa_u(v_\lambda-u_\lambda)(x_{-1}),
\end{align*}
which implies that
\[(1-\lambda\kappa_u)(u_\lambda-v_\lambda)(x_{-1})\geq u_\lambda(x_0)-v_\lambda(x_0)>0,\]
which leads to a contradiction. Then $u_\lambda(x_{-1})>v_\lambda(x_{-1})$. We then go on to find, using (l2), that

\begin{align*}
u_\lambda(x_{-1})-u_\lambda(x_{-2})&\leq \ell\big(x_{-2},x,\lambda u_\lambda(x_{-2}),\lambda u_\lambda(x_{-1})\big)
\\ &\leq \ell\big(x_{-2},x_{-1},\lambda u_\lambda(x_{-2}),\lambda v_\lambda(x_{-1})\big)
\\ &\leq \ell\big( x_{-2},x_{-1},\lambda v_\lambda(x_{-2}),\lambda v_\lambda(x_{-1})\big)
\\ &=v_\lambda(x_{-1})-v_\lambda(x_{-2}),
\end{align*}
therefore $u_\lambda(x_{-2})-v_\lambda(x_{-2}) \geq u_\lambda(x_{-1})-v_\lambda(x_{-1})>0$.

 By induction, we have $u_\lambda(x_{-k})>v_\lambda(x_{-k})$ for all $k\geq 0$.

\medskip
\noindent {\bf Step 2.} By Step 1 and (l2), we have
\begin{equation}\label{u=v}
\begin{aligned}
v_\lambda(x_{-k+1})-v_\lambda(x_{-k})&=\ell\big(x_{-k},x_{-k+1},\lambda v_\lambda(x_{-k}),\lambda v_\lambda(x_{-k+1})\big)
\\ &\geq \ell\big(x_{-k},x_{-k+1},\lambda u_\lambda(x_{-k}),\lambda u_\lambda(x_{-k+1})\big)
\\ &\geq u_\lambda(x_{-k+1})-u_\lambda(x_{-k}),
\end{aligned}
\end{equation}
which implies
\[u_\lambda(x_{-k})-v_\lambda(x_{-k})\geq \dots\geq u_\lambda(x_0)-v_\lambda(x_{0}).\]
By the definition of $x_0$, all inequalities above are equalities, that is,
\begin{equation}\label{=max}
u_\lambda(x_{-k})-v_\lambda(x_{-k})=\max_{x\in X}\big(u_\lambda(x)-v_\lambda(x)\big)>0,\quad \forall k\geq 0.
\end{equation}

\medskip

\noindent {\bf Step 3.} Define the probability measure
\begin{equation}\label{defmu}
\mu_N:=N^{-1}\sum_{k=-N}^{-1}\delta_{(x_k,x_{k+1})}.
\end{equation}
By compactness of measures, let $(N_n)_n$ be an extraction such that $\mu_{N_n} \to \mu$ as $ N\to+\infty$.
Since
\[\left| \int_{X\times X} \big(f(x)-f(y)\big)d\mu_N\right|=\left| \frac{f(x_0)-f(x_{-N})}{N}  \right| \leq \frac{2\|f\|_\infty}{N}\to 0,\]
the measure $\mu$ is closed. We also have
\[\int_{X\times X}\ell\big(z,x,\lambda v_\lambda(z),\lambda v_\lambda(x)\big)d\mu_N=\frac{v_\lambda(x_0)-v_\lambda(x_{-N})}{N}\to 0.\]
As all inequalities in (\ref{u=v}) are equalities, it follows that
\begin{equation}\label{u==v}
\int_{X\times X}\ell\big(z,x,\lambda v_\lambda(z),\lambda v_\lambda(x)\big)d\mu=\int_{X\times X}\ell\big(z,x,\lambda u_\lambda(z),\lambda u_\lambda(x)\big)d\mu=0.
\end{equation}
Since $\mu_{N_n}\to \mu$, for each $(z,x)\in \textrm{supp}(\mu)$, there is a sequence $(z_{n},x_{n})\in \textrm{supp}(\mu_{N_n})$ with $(z_n,x_n)\to (z,x)$. By Step 2, $u_\lambda(z_{n})-v_\lambda(z_n)$ equals a positive constant $M$. Therefore, $u_\lambda(z)-v_\lambda(z)=M>0$. Similarly, we have $u_\lambda(x)-v_\lambda(x)=M>0$. By (l2), we have
\[\ell(z,x,r,s)=\ell\big(z,x,\lambda u_\lambda(z),\lambda u_\lambda (x)\big),\]
for all $(z,x)\in \textrm{supp}(\mu)$ and $r\in [\lambda v_\lambda(z),\lambda u_\lambda(z)]$, $s\in[\lambda v_\lambda(x),\lambda u_\lambda(x)]$.

\medskip

\noindent {\bf Conclusion under hypothesis (1).} By (\ref{=max}), the intervals $[\lambda v_\lambda(z),\lambda u_\lambda(z)]$ and $[\lambda v_\lambda(x),\lambda u_\lambda(x)]$ have no-empty interior, by the concavity we have
\[\ell\big(z,x,\lambda u_\lambda(z),\lambda u_\lambda(x)\big)=\max_{r,s}\ell(z,x,r,s),\quad \forall (z,x)\in \textrm{supp}(\mu).\]
By (l2), we have
\[\ell(z,x,r,s)=\ell\big(z,x,\lambda u_\lambda(z),\lambda u_\lambda(x)\big),\quad \forall r\leq \lambda u_\lambda(z),\ s\leq \lambda u_\lambda(x).\]
Let $u_0<0$ and $u_0\leq \min_{x\in X}\lambda u_\lambda(x)$, we have
\[\int_{X\times X}\ell(z,x,u_0,u_0)d\mu=\int_{X\times X}\ell\big(z,x,\lambda u_\lambda(z),\lambda u_\lambda(x)\big)d\mu=0.\]
By (l2) we also have
\[0=\int_{X\times X}\ell(z,x,u_0,u_0)d\mu\geq \int_{X\times X}\ell(z,x,0,0)d\mu.\]
Thus, $\mu$ is a Mather measure of $\ell(z,x,0,0)$. By (l2) again we have
\[\ell(z,x,r,s)=\ell(z,x,u_0,u_0),\quad \forall (z,x)\in \textrm{supp}(\mu),\ \forall r,s\in [u_0,0].\]
Since $u_0<0$, we have
\[\partial_{u,v}\ell(z,x,0,0)=0,\quad \forall (z,x)\in \textrm{supp}(\mu),\]
which contradicts (l4).

\medskip

\noindent {\bf Conclusion under hypothesis (2).} Since the set of Mather measures is compact, there is $\epsilon>0$ such that
\[\int_{X\times X}\Lambda(z,x)d\mu(z,x)<-2\epsilon,\quad \forall \mu \in \mathfrak M_0.\]
We first show that there is $r>0$ and $N_0>0$ such that
\[N^{-1}\sum_{k=-N}^{-1}\Lambda(x_{k},x_{k+1})<-2\epsilon,\quad \forall \lambda\in (0,r),\ \forall N\geq N_0.\]
If not, we assume that there is a sequence $\lambda_n\to 0$ and $N_n\to +\infty$ such that
\[N_n^{-1}\sum_{k=-N_n}^{-1}\Lambda(x_{k},x_{k+1})\geq -2\epsilon.\]
Extracting a subsequence if necessary, let $\mu_{\lambda_n}$ be the limit given by (\ref{defmu}) and $\mu_{\lambda_n}\to \mu$. By (\ref{u==v}), $\mu$ is a Mather measure. We then get a contradiction.

Since $\partial_{u,v}\ell$ is continuous for $(u,v)$ near $(0,0)$, for $\lambda$ small and $N$ large, we have
\[N^{-1}\sum_{k=-N}^{-1} \Big[ \partial_u \ell\big(x_{k},x_{k+1},\lambda u_\lambda(x_k),\lambda u_\lambda(x_{k+1})\big)+\partial_v \ell\big(x_{k},x_{k+1},\lambda u_\lambda(x_k),\lambda u_\lambda(x_{k+1})\big)\Big]<-\epsilon.\]
Since all inequalities in (\ref{u=v}) are equalities, we have
\[\partial_u \ell\big(x_{k},x_{k+1},\lambda u_\lambda(x_k),\lambda u_\lambda(x_{k+1})\big)=\partial_v \ell\big(x_{k},x_{k+1},\lambda u_\lambda(x_k),\lambda u_\lambda(x_{k+1})\big)=0,\]
which leads to a contradiction.
\end{proof}

\section*{Acknowledgements}

The authors are supported by ANR CoSyDy (ANR-CE40-0014).








\bibliography{bib}

\begin{thebibliography}{10}

\bibitem{CCJWY}
{\sc P.~Cannarsa, W.~Cheng, L.~Jin, K.~Wang, and J.~Yan}, {\em Herglotz'
  variational principle and {L}ax-{O}leinik evolution}, J. Math. Pures Appl.,
  141 (2020), pp.~99--136.

\bibitem{CP}
{\sc P.~Cardaliaguet and A.~Porretta}, {\em Long time behavior of the master
  equation in mean field game theory}, Analysis \& PDE, 12 (2019),
  pp.~1397--1453.

\bibitem{Chen}
{\sc Q.~Chen}, {\em Convergence of solutions of {H}amilton–{J}acobi equations
  depending nonlinearly on the unknown function}, Advances in Calculus of
  Variations,  (2021).

\bibitem{CCIZ}
{\sc Q.~Chen, W.~Cheng, H.~Ishii, and K.~Zhao}, {\em Vanishing contact
  structure problem and convergence of the viscosity solutions}, Comm. Partial
  Differential Equations, 44 (2019), pp.~801--836.

\bibitem{CFZZ}
{\sc Q.~Chen, A.~Fathi, M.~Zavidovique, and J.~Zhang}, {\em Convergence of the
  solutions of the nonlinear discounted {H}amilton–{J}acobi equation: {T}he
  central role of {M}ather measures}, Journal de Mathématiques Pures et
  Appliquées, 181 (2024), pp.~22--57.

\bibitem{DFIZ}
{\sc A.~Davini, A.~Fathi, R.~Iturriaga, and M.~Zavidovique}, {\em Convergence
  of the solutions of the discounted equation: the discrete case}, Math. Z.,
  284 (2016), pp.~1021--1034.

\bibitem{DFIZ2}
\leavevmode\vrule height 2pt depth -1.6pt width 23pt, {\em Convergence of the
  solutions of the discounted {H}amilton-{J}acobi equation}, Invent. Math., 206
  (2016), pp.~29--55.

\bibitem{DW}
{\sc A.~Davini and L.~Wang}, {\em On the vanishing discount problem from the
  negative direction}, Discrete Contin. Dyn. Syst., 41 (2021), pp.~2377--2389.

\bibitem{DZ}
{\sc A.~Davini and M.~Zavidovique}, {\em Convergence of the solutions of
  discounted {H}amilton-{J}acobi systems}, Advances in Calculus of Variations,
  14 (2019), pp.~1--15.

\bibitem{DD}
{\sc I.~C. Dolcetta and A.~Davini}, {\em On the vanishing discount
  approximation for compactly supported perturbations of periodic
  {H}amiltonians: the 1d case}, Comm. Partial Differential Equations, 48
  (2023), p.~576–622.

\bibitem{I}
{\sc H.~Ishii}, {\em The vanishing discount problem for monotone systems of
  {H}amilton-{J}acobi equations. {P}art 1: linear coupling}, Mathematics in
  Engineering, 3 (2021), pp.~1--21.

\bibitem{IJ}
{\sc H.~Ishii and L.~Jin}, {\em The vanishing discount problem for monotone
  systems of {H}amilton-{J}acobi equations. {P}art 2: nonlinear coupling},
  Calc. Var., 59 (2020), pp.~1--28.

\bibitem{IMT}
{\sc H.~Ishii, H.~Mitake, and H.~Tran}, {\em The vanishing discount problem and
  viscosity {M}ather measures. {P}art 1: {T}he problem on a torus}, J. Math.
  Pures Appl., 108 (2017), p.~125–149.

\bibitem{IMT2}
\leavevmode\vrule height 2pt depth -1.6pt width 23pt, {\em The vanishing
  discount problem and viscosity {M}ather measures. part 2: {B}oundary value
  problems}, J. Math. Pures Appl., 108 (2017), pp.~261--305.

\bibitem{IS20}
{\sc H.~Ishii and A.~Siconolfi}, {\em The vanishing discount problem for
  {H}amilton-{J}acobi equations in the {E}uclidean space}, Comm. Partial
  Differential Equations, 45 (2020), pp.~525--560.

\bibitem{IS}
{\sc R.~Iturriaga and H.~S\'{a}nchez-Morgado}, {\em Limit of the infinite
  horizon discounted {H}amilton-{J}acobi equation}, Discrete Contin. Dyn. Syst.
  Ser. B, 15 (2011), pp.~623--635.

\bibitem{LPV}
{\sc P.-L. Lions, G.~Papanicolaou, and S.~Varadhan}, {\em Homogenization of
  {H}amilton-{J}acobi equation}.
\newblock unpublished preprint, 1987.

\bibitem{MT}
{\sc H.~Mitake and H.~Tran}, {\em Selection problems for a discount degenerate
  viscous {H}amilton-{J}acobi equation}, Adv. Math., 306 (2017), pp.~684--703.

\bibitem{N}
{\sc P.~Ni}, {\em Multiple asymptotic behaviors of solutions in the generalized
  vanishing discount problem}, Proc. Amer. Math. Soc., 151 (2023),
  pp.~5239--5250.

\bibitem{Su}
{\sc X.~Su and P.~Thieullen}, {\em Convergence of discrete aubry–mather model
  in the continuous limit}, Nonlinearity, 31 (2018), p.~2126.

\bibitem{WWY}
{\sc K.~Wang, L.~Wang, and J.~Yan}, {\em Variational principle for contact
  {H}amiltonian systems and its applications}, J. Math. Pures Appl., 123
  (2019), pp.~167--200.

\bibitem{WYZ}
{\sc Y.~Wang, J.~Yan, and J.~Zhang}, {\em Convergence of viscosity solutions of
  generalized contact {H}amilton-{J}acobi equations}, Arch. Rational Mech.
  Anal., 241 (2021), pp.~1--18.

\bibitem{WYZ2}
\leavevmode\vrule height 2pt depth -1.6pt width 23pt, {\em On the negative
  limit of viscosity solutions for discounted {H}amilton-{J}acobi equations},
  J. Dyn. Differ. Equ.,  (2022).

\bibitem{Z1}
{\sc M.~Zavidovique}, {\em Strict sub-solutions and {M}a\~{n}\'{e} potential in
  discrete weak {KAM} theory}, Comment. Math. Helv., 87 (2012), pp.~1--39.

\bibitem{ZAP}
{\sc M.~Zavidovique}, {\em {Convergence of solutions for some degenerate
  discounted {H}amilton–{J}acobi equations}}, Analysis \& PDE, 15 (2022),
  pp.~1287 -- 1311.

\bibitem{Z}
\leavevmode\vrule height 2pt depth -1.6pt width 23pt, {\em Discrete and
  {C}ontinuous {W}eak {KAM} {T}heory: an introduction through examples and its
  applications to twist maps}, 2023.

\bibitem{Zhang}
{\sc J.~Zhang}, {\em Limit of solutions for semilinear {H}amilton-{J}acobi
  equations with degenerate viscosity}, Advances in Calculus of Variations,
  (2023).

\bibitem{Zi}
{\sc B.~Ziliotto}, {\em Convergence of the solutions of the discounted
  {H}amilton-{J}acobi equation: {A} counterexample}, J. Math. Pures Appl., 128
  (2019), pp.~330--338.

\end{thebibliography}
\bibliographystyle{siam}

\end{document}